\theoremstyle{definition}
\newtheorem{theorem}{Theorem}[section]
\newtheorem{definition}[theorem]{Definition}
\newtheorem{lemma}[theorem]{Lemma}
\newtheorem{corollary}[theorem]{Corollary}
\newtheorem{remark}[theorem]{Remark}
\numberwithin{equation}{section}
\newtheorem{problem}{Problem}
\def\<{\left < }
\def\>{\right >}
\def\({\left ( }
\def\){\right )}
\def\e{\eqref}
\def\C2{${\bf C}^2$}
\begin{document}

\title[A General Inequality for Warped Product $CR$-Subman. of K$\ddot{A}$hler Man.]{A General Inequality for Warped Product $CR$-Submanifolds of K$\ddot{A}$hler Manifolds}

\author[A. Mustafa]{Abdulqader MUSTAFA}
\address{Department of Mathematics, Faculty of Arts and Science, Palestine Technical University, Kadoorei, Tulkarm, Palestine}
\email{abdulqader.mustafa@ptuk.edu.ps}.
\author[Cenap $\ddot{O}$zel]{Cenap $\ddot{O}$ZEL}
\address{Department of Mathematics, Faculty of Science, King Abdulaziz University, 21589 Jeddah, Saudi Arabia}
\email{cozel@kau.edu.sa}.
\author[P. Linker]{Patrick LINKER}
\address{Department of Materials Testing, University of Stuttgart, Stuttgart, Germany}
\email{Mrpatricklinker@gmail.com}.
\author[M. Sati]{Monika SATI}
\address{Department of Mathematics, HNBGU, SRT Campus Badshahithaul, Tehri Garhwal, Uttarakhand, India}
\email{monikasati123@gmail.com}.
\author[A. Pigazzini]{Alexander PIGAZZINI}
\address{Mathematical and Physical Science Foundation, 4200 Slagelse, Denmark}
\email{pigazzini@topositus.com}.

\maketitle
\begin{abstract}
In this paper, warped product contact $CR$-submanifolds in Sasakian, Kenmotsu and cosymplectic manifolds are shown to possess a geometric property; namely $\mathcal{D}_T$-minimal. Taking benefit from this property, an optimal general inequality for warped product contact $CR$-submanifolds is established in both Sasakian and Kenmotsu manifolds by means of the Gauss equation, we leave cosyplectic because it is an easy structure. Moreover, a rich geometry appears when the necessity and sufficiency are proved and discussed in the equality case. Applying this general inequality, the inequalities obtained by Munteanu are derived as particular cases, whereas the inequality obtained in \cite{A} is corrected. Up to now, the method used by Chen and Munteanu can not extended for general ambient manifolds, this is because many limitations in using Codazzi equation. Hence, Our method depends on the Gauss equation. The inequality is constructed to involve an intrinsic invariant (scalar curvature) controlled by an extrinsic one (the second fundamental form), which provides an answer for Problem \ref{prob3}. As further research directions, we have addressed a couple of open problems arose naturally during this work and depending on its results.\\

\noindent{\it{AMS Subject Classification (2010)}}: {53C15; 53C40; 53C42; 53B25}

\noindent{\it{Keywords}}:{ Warped product CR-submanifolds; mean curvature vector; scalar curvature; minimal submanifolds; K$\ddot{a}$hler manifolds; Gauss equation}

\end{abstract}

\sloppy

\section{Introduction}

The notion of warped products has been playing some important roles in the theory of general relativity as they have been providing the best mathematical models of our universe for now. Also recently new types of warped product manifolds have been introduced (see for example, \cite{obl}, \cite{seq}, \cite{qESeq} and \cite{pndp}). 

Extrinsic and intrinsic Riemannian invariants have vast applications in other fields of science. Classically, among extrinsic invariants, the shape operator and the squared mean curvature are the most important ones. Among the main intrinsic invariants, sectional, Ricci and scalar curvatures are the well-known ones. So, based on Nash's Theorem, our research programs is to search for control of
extrinsic quantities in relation to intrinsic quantities of Riemannian manifolds via Nash's
Theorem and to search for their applications \cite{2233ee}, \cite{55kk99}. Since it is an inevitable motivation, this was quite enough for Chen to address the following problem

\begin{problem}\label{prob3}
Establish simple relationships between the main extrinsic invariants and the main intrinsic invariants of a submanifold.
\end{problem}
 
Several famous results in differential geometry, such as isoperimetric inequality, Chern-Lashof's inequality, and Gauss-Bonnet's theorem among others, can be regarded as results in this respect. The current paper aims to continue this sequel of inequalities.

Combining special case inequalities in \cite{2211gg}, we also have
\begin{theorem}\label{792}
Let $M^n=N_T\times _fN_\perp$ be a $CR$-warped product submanifold in a complex space form $\tilde M^{2m}(c_{Ka})$. Then, we have the following 
$$\frac{1}{2}||h||^2\ge 2n_1n_2\frac{c_{Ka}}{4}+n_2 ||\nabla \ln f||^2-n_2~ \Delta (\ln f).$$
\end{theorem}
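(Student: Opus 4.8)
The plan is to reduce the stated inequality to a pointwise estimate on the second fundamental form by computing the sum of the mixed sectional curvatures of $M$ in two ways, one intrinsic and one via the Gauss equation, exactly as the introduction advertises. First I would fix a local orthonormal frame adapted to the product structure: a $J$-adapted frame $\{e_1,\dots,e_{n_1},Je_1,\dots,Je_{n_1}\}$ for the holomorphic distribution $\mathcal{D}_T=TN_T$ (of real dimension $2n_1$) and a frame $\{e_1^\ast,\dots,e_{n_2}^\ast\}$ for the totally real distribution $\mathcal{D}_\perp=TN_\perp$, so that $\{Je_1^\ast,\dots,Je_{n_2}^\ast\}$ is orthonormal in the normal bundle. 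The whole argument then revolves around the quantity $\sum_{i,a}K(e_i,e_a^\ast)$.

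On the intrinsic side I would invoke the warped product (O'Neill) curvature identity: for a unit vector $X$ tangent to the base $N_T$ and a unit vector $V$ tangent to the fibre $N_\perp$ one has $K(X,V)=-f^{-1}\,\mathrm{Hess}\,f(X,X)$. Summing over the adapted frame, using $\mathrm{tr}\,\mathrm{Hess}\,f=\Delta f$ and the elementary identity $\Delta f/f=\Delta(\ln f)+\|\nabla\ln f\|^2$, produces (in the paper's sign convention for $\Delta$) the two analytic terms $n_2\Delta(\ln f)-n_2\|\nabla\ln f\|^2$. On the extrinsic side I would apply the Gauss equation $K(e_i,e_a^\ast)=\tilde K(e_i,e_a^\ast)+g(h(e_i,e_i),h(e_a^\ast,e_a^\ast))-\|h(e_i,e_a^\ast)\|^2$ and sum. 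Since $Je_i\in\mathcal{D}_T$ is orthogonal to $e_a^\ast\in\mathcal{D}_\perp$, every mixed plane is totally real, so in the complex space form $\tilde M^{2m}(c_{Ka})$ we have $\tilde K(e_i,e_a^\ast)=c_{Ka}/4$; summing over the $2n_1$ base directions and $n_2$ fibre directions yields the curvature term $2n_1n_2\,c_{Ka}/4$. Equating the two expressions for $\sum_{i,a}K(e_i,e_a^\ast)$ gives the identity
$$\sum_{i,a}\|h(e_i,e_a^\ast)\|^2-\sum_{i,a}g(h(e_i,e_i),h(e_a^\ast,e_a^\ast))=2n_1n_2\frac{c_{Ka}}{4}+n_2\|\nabla\ln f\|^2-n_2\Delta(\ln f).$$

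The final and most delicate step is to dominate the left-hand side by $\tfrac12\|h\|^2$. Here I would use the $\mathcal{D}_T$-minimality established earlier in the paper: because $\sum_i h(e_i,e_i)=0$, the cross term $\sum_{i,a}g(h(e_i,e_i),h(e_a^\ast,e_a^\ast))=\sum_a g\big(\sum_i h(e_i,e_i),h(e_a^\ast,e_a^\ast)\big)$ vanishes identically. It then remains to split $\|h\|^2$ over the blocks $(\mathcal{D}_T,\mathcal{D}_T)$, $(\mathcal{D}_T,\mathcal{D}_\perp)$ and $(\mathcal{D}_\perp,\mathcal{D}_\perp)$ and discard the two nonnegative pure blocks, giving $\tfrac12\|h\|^2\ge\sum_{i,a}\|h(e_i,e_a^\ast)\|^2$; combined with the displayed identity this closes the argument.

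I expect the main obstacle to be precisely the control of the mean-curvature cross term $\sum_{i,a}g(h(e_i,e_i),h(e_a^\ast,e_a^\ast))$: without $\mathcal{D}_T$-minimality its sign is not controlled, so verifying that property (and carefully tracking the sign convention for $\Delta$ throughout) is the crux of the proof. The equality case will then correspond exactly to the vanishing of the two discarded pure blocks $h(\mathcal{D}_T,\mathcal{D}_T)=0$ and $h(\mathcal{D}_\perp,\mathcal{D}_\perp)=0$, which I would record separately to recover the geometry of the equality discussion.
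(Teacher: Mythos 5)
Your proof is correct, and it reaches the stated inequality by a genuinely leaner route than the paper's. The paper deduces Theorem \ref{792} from the general Theorem \ref{299}: it traces the Gauss equation over the whole tangent space to get the scalar-curvature relation \eqref{15}, feeds in the warped-product decomposition of Lemma \ref{firstlemma} (which already contains the mixed-curvature identity \eqref{24}), uses $\mathcal{D}_T$-minimality together with two binomial-theorem manipulations to discard the pure $\mathcal{D}_T$- and $\mathcal{D}_\perp$-blocks and cancel the $n^2\|\vec H\|^2$ term, and finally (Section 6) evaluates $2\bigl(\tilde\tau(T_xM^n)-\tilde\tau(T_xN_T)-\tilde\tau(T_xN_\perp)\bigr)=\frac{c_{Ka}n_1n_2}{2}$ from the space-form curvature tensor. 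You instead trace the Gauss equation only over the mixed planes $e_i\wedge e_a^\ast$, equate with O'Neill's formula (the paper's \eqref{24}), kill the cross term $\sum_{i,a}g(h(e_i,e_i),h(e_a^\ast,e_a^\ast))$ by $\mathcal{D}_T$-minimality, and drop the two pure blocks via $\frac{1}{2}\|h\|^2\ge\sum_{i,a}\|h(e_i,e_a^\ast)\|^2$; these are exactly the same cancellations and the same discarded nonnegative blocks as in the paper, so your equality condition $h(\mathcal{D}_T,\mathcal{D}_T)=h(\mathcal{D}_\perp,\mathcal{D}_\perp)=0$ coincides with conditions $(a)$ and $(b)$ of the paper's equality discussion, but your bookkeeping never introduces $\tau$ or $\tilde\tau$, bypasses Lemma \ref{firstlemma}, and obtains the constant for free, since $Je_i\perp e_a^\ast$ forces every mixed plane to be totally real with $\tilde K=c_{Ka}/4$. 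What the paper's heavier organization buys is the general Kähler-ambient inequality of Theorem \ref{299}, of which this statement is one specialization; what yours buys is directness for the space-form case. Two small bookkeeping remarks: your convention $\dim_{\mathbb{R}}N_T=2n_1$ is the one under which the stated constant $2n_1n_2\frac{c_{Ka}}{4}$ is correct (the paper's footnote convention $n_1=\dim N_1$ would render the same quantity as $n_1n_2\frac{c_{Ka}}{4}$, as in Section 6); and the intermediate identity you quote, $\Delta f/f=\Delta(\ln f)+\|\nabla\ln f\|^2$, holds for the analyst's sign of the Laplacian, whereas with the paper's convention \eqref{M1} it reads $\Delta f/f=\Delta(\ln f)-\|\nabla\ln f\|^2$; your final expression $n_2\Delta(\ln f)-n_2\|\nabla\ln f\|^2$ for the mixed-curvature sum is nevertheless the correct one in the paper's convention, so this is a blemish of exposition rather than a gap.
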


The current paper is organized to include eight sections. After the introduction, we present in section two, preliminaries, the basic definitions and formulas. In section three, we prove preparatory basic lemmas, which are necessary and useful for next sections. In the fourth section, it has been shown that warped product $CR$-submanifolds in Kahler and nearly Kahler manifolds possess a geometric property; namely $\mathcal{D}_T$-minimal submanifolds. Section five is devoted to present the statement and proof of the the main theorem in this article, here we consider warped product $CR$-submanifolds in complex space form to prove a general inequality involving the scalar curvature and the the squared norm of the second fundamental form. This inequality is derived using the Gauss equation, it generalizes all other inequalities which were derived by means of Codazzi equation. Moreover, it presents a new answer for Problem \ref{prob3}. Section six provides many geometric applications, part of them is obtaining the inequalities of \cite{2211gg}] as particular case inequalities from our main inequality. In the seventh section, we extend this inequality to generalized complex space form as an ambient manifold. In the final section,  we hypothesize two open problems arose naturally due to the results of this work.

\section{Preliminaries}
Let $\tilde{M}^m$ be a $C^\infty$ real $m$-dimensional manifold\footnotemark[\value{footnote}]\footnotetext{Throughout this work, we use the symbol \textasciitilde $~$for ambient manifolds, in order to be distinguished from the terminology of submanifolds.}. The {\it curvature tensor} $\tilde R$ of $\tilde \nabla$ is a tensor field of type $(1,~3)$ given by 
\begin{equation}\label{C1}
\tilde R(X, ~Y) Z =\tilde\nabla_X\tilde\nabla_YZ-\tilde\nabla_Y\tilde\nabla_XZ-\tilde\nabla_{[X,~ Y]}Z,
\end{equation}
and the $(0,~4)$ tensor field defined by 
\begin{equation}\label{D1}
\tilde R(X,~Y,~Z,~W)=\tilde g(\tilde R(X,~Y)Z,~W)
\end{equation}
is called the {\it Riemannian curvature tensor}, for any $X,~ Y,~ Z,~W\in \Gamma(T\tilde M^m)$. It is well-known that the Riemannian curvature tensor is a local isometry invariant.

If we choose two linearly independent tangent vectors $X,~Y\in T_x\tilde M^m$, then the {\it sectional curvature} of the $2$-plane $\pi$ spanned by $X$ and $Y$ is given in terms of the Riemannian curvature tensor $\tilde R$ by
\begin{equation}\label{E1}
\tilde K(X\wedge Y)=\frac{\tilde g(\tilde R(X,Y)Y, X)}{\tilde g(X,X) \tilde g(Y,Y)- (\tilde g(X,Y))^2}.
\end{equation}
In case that the $2$-plane $\pi$ is spanned by orthogonal unit vectors $X$ and $Y$ from the tangent space $T_x\tilde M^m,~x\in \tilde M^m$, the previous definition may be written as 
\begin{equation}\label{28}
\tilde K(\pi)=\tilde K_{\tilde M^m}(X\wedge Y)=\tilde g(\tilde R(X, ~Y)Y,~X).
\end{equation}

 Next, consider a local field of orthonormal frames $\{e_1, \cdots, e_m\}$ on $\tilde M^m$. 

In this context, we shall define another important Riemannian intrinsic invariant called the {\it scalar curvature} of $\tilde M^m$, and denoted by $\tilde\tau (T_x\tilde M^m)$, which, at some $x$ in $\tilde M^m$, is given by 
\begin{equation}\label{10}
\tilde\tau (T_x\tilde M^m)=\sum_{1\leq i\textless j\leq m}\tilde K_{ij},
\end{equation}
where $\tilde K_{ij}=\tilde K(e_i\wedge e_j)$. It is clear that, equation (\ref{10}) is congruent to 
\begin{equation}\label{}
2\tilde\tau (T_x\tilde M^m)=\sum_{1\leq i\neq j\leq m}\tilde K_{ij}.
\end{equation}

In particular, for a $2$-dimensional Riemannian manifold, the scalar curvature is its {\it Gaussian curvature}.

Next, we recall two important differential operators of a differentiable function $\psi$ on $\tilde M^m$; namely the {\it gradient} $\tilde\nabla \psi$ and the {\it Laplacian} $\Delta \psi$ of $\psi$, which are defined, respectively, as follows
\begin{equation}\label{L1}
\tilde g(\tilde\nabla \psi, X) = X\psi
\end{equation}
and
\begin{equation}\label{M1}
\Delta \psi =\sum_{i=1}^{m}((\tilde\nabla_{e_i}e_i)\psi- e_ie_i \psi), 
\end{equation}
for any vector field $X$ tangent to $\tilde M^m$, where $\tilde\nabla$ denotes the Levi-Civita connection on $\tilde M^m$. As a consequence, we have
\begin{equation}\label{consequence}
||\tilde \nabla \psi||^2=\sum_{i=1}^m\bigl(e_i(\psi)\bigr)^2.
\end{equation}

From the integration theory of manifolds, if $\tilde M^m$ is orientable compact, then we have
\begin{equation}\label{22}
\int_{\tilde M^m} \Delta f dV=0,
\end{equation}
where $dV$ denotes to the volume element of $\tilde M^m$.

In an attempt to construct manifolds of negative curvatures, in \cite{ddyy7} introduced the notion of {\it warped product manifolds} as follows:

\noindent
Let $N_1$ and $N_2$ be two Riemannian manifolds with Riemannian metrics $g_{N_1}$ and $g_{N_2}$, respectively, and $f>0$ a $C^\infty$ function on $N_1$. Consider the product manifold $N_1\times N_2$ with its projections $\pi_1:N_1\times N_2\mapsto N_1$ and $\pi_2:N_1\times N_2\mapsto N_2$. Then, the {\it warped product} $\tilde M^m= N_1\times _fN_2$ is the Riemannian manifold $N_1\times N_2=(N_1\times N_2, \tilde g)$ equipped with a Riemannian structure such that 
 $\tilde g=g_{N_1} + f^2 g_{N_2}$.

A warped product manifold $\tilde M^m=N_1\times _fN_2$ is said to be {\it trivial} if the warping function $f$ is constant. For a nontrivial warped product $N_1\times _fN_2$, we denote by $\mathcal{D}_1$ and $\mathcal{D}_2$ the distributions given by the vectors tangent to leaves and fibers, respectively. Thus, $\mathcal{D}_1$ is obtained from tangent vectors of $N_1$ via the horizontal lift and $\mathcal{D}_2$ is obtained by tangent vectors of $N_2$ via the vertical lift.

Now, let $\{e_1,\cdots,e_{n_1}, e_{n_1+1}, \cdots, e_m\}$ be local fields of orthonormal frame of $\Gamma (T\tilde M^m)$ such that $n_1$, $n_2$ and $m$ are the dimensions of $N_1$, $N_2$ and $\tilde M^m$, respectively. Then, for any Riemannian warped product $ \tilde M^m=N_1\times _fN_2$. It is well known that the sectional curvature and the warping function are related by [\cite{2233ee}, \cite{55kk99}, \cite{ssee44}]
\begin{equation}\label{24}
\sum_{a=1}^{n_1}\sum_{A=n_1+1}^{m} \tilde K(e_a\wedge e_A)=\frac{n_2\Delta f}{f}.
\end{equation}

Now, we turn our attention to the differential geometry of the submanifold theory. The {\it Gauss} and {\it Weingarten formulas} are, respectively, given by
\begin{equation}\label{3}
\tilde \nabla_X Y=\nabla_X Y+h(X,Y) 
\end{equation}
and
\begin{equation}\label{4}
\tilde\nabla_X\zeta=-A_\zeta X+\nabla^\perp_X\zeta 
\end{equation}
for all $X,Y\in \Gamma(TM^n)$ and $\zeta\in \Gamma (T^\perp M^n)$, where $\tilde\nabla$ and $\nabla$ denote respectively for the Levi-Civita and the {\it induced} Levi-Civita connections on $\tilde M^m$ and $M^n$, and $\Gamma(TM^n)$ is the module of differentiable sections of the vector bundle $TM^n$. $\nabla^\perp$ is the {\it normal connection} acting on the normal bundle $T^\perp M^n$. 

Here, it is well-known that the {\it second fundamental form} $h$ and the {\it shape operator} $A_\zeta$ of $M^n$ are related by  
\begin{equation}\label{5}
g(A_\zeta X,Y)=g(h(X,Y),\zeta)
\end{equation}
for all $X,Y\in \Gamma(TM^n)$ and $\zeta\in \Gamma(T^\perp M^n)$, [\cite{pom}, \cite{iijj77}]. 

Geometrically, $M^n$ is called a {\it totally geodesic} submanifold in $\tilde M^m$ if $h$ vanishes identically. Particularly, the {\it relative null space}, ${\mathcal N}_x$, of the submanifold $M^n$ in the Riemannian manifold $\tilde M^m$ is defined at a point $x\in M^n$ by as
\begin{equation}\label{19}
{\mathcal N}_x=\{ X\in T_xM^n: h(X, Y)=0~~~ \forall~ Y\in T_xM^n\}.
\end{equation}

Likewise, we consider a local field of orthonormal frames \footnotemark[\value{footnote}] \footnotetext{Throughout this work, $M^n=N_1\times _fN_2$ denotes for the isometrically immersed warped product submanifold in $\tilde M^m$. The numbers $m,~n,~n_1,$ and $n_2$ are the dimensions of $\tilde M^m$, $M^n$, $N_1$ and $N_2$, respectively.} 
$\{e_1, \cdots , e_n, e_{n+1}, \cdots, e_m\}$ on $\tilde M^m$, such that, restricted to $M^n$, $\{e_1, \cdots , e_n\}$ are tangent to $M^n$ and $\{e_{n+1}, \cdots, e_m\}$ are normal to $M^n$. Then, the {\it mean curvature vector} $\vec H(x)$ is introduced as 
\begin{equation}\label{8}
\vec H(x)=\frac{1}{n} \sum_{i=1}^{n} h(e_i, e_i),
\end{equation}

On one hand, we say that $M^n$ is a {\it minimal submanifold} of $\tilde M^m$ if $\vec H=0$. On the other hand, one may deduce that $M^n$ is totally umbilical in $\tilde M^m$ if and only if $h(X,Y)=g(X,Y) \vec H$, for any $X,~Y\in \Gamma (TM^n)$. It is remarkable to note that the scalar curvature $\tau (x)$ of $M^n$ at $x$ is identical with the scalar curvature of the tangent space $T_xM^n$ of $M^n$ at $x$; that is, $\tau (x)= \tau (T_xM^n)$ [\cite{2233ee}].

In this series, the well-known {\it equation} of {\it Gauss} is given by 
\begin{equation}\begin{split}\label{12}
R(X,Y,Z,W)= \tilde R(X,Y,Z,W)
\\
+ g(h(X, W), h(Y, Z)) - g(h(X, Z), h(Y, W)),
\end{split}
\end{equation}

\noindent
for any vectors $X, ~Y,~ Z,~ W\in \Gamma (TM^n)$, where $\tilde R$ and $R$ are the curvature tensors of $\tilde M^m$ and $M^n$, respectively.

From now on, we refer to the coefficients of the second fundamental form $h$ of $M^n$ with respect to the above local frame by the following notation 
\begin{equation}\label{13}
h_{ij}^r=g(h(e_i,e_j),e_r),
\end{equation}
 where $i, j \in \{1, . . . , n\}$, and $r \in \{ n+1, . . . , m\}$. First, by making use of $(\ref{13})$, $(\ref{12})$ and $(\ref{28})$, we get the following
\begin{equation}\label{}
 K (e_i \wedge e_j )= \tilde K (e_i \wedge e_j) + \sum_{r=n+1}^{m} (g(h_{ii}^{r}~e_r, h_{jj}^{r} ~e_r)- g(h_{ij}^{r}~e_r, h_{ij}^{r}~e_r)).
\end{equation}

Equivalently,
\begin{equation}\label{14}
 K (e_i \wedge e_j )= \tilde K (e_i \wedge e_j) + \sum_{r=n+1}^{m} (h_{ii}^{r} h_{jj}^{r}- (h_{ij}^{r})^2),
\end{equation}
where $\tilde K (e_i \wedge e_j)$ denotes the sectional curvature of the $2$-plane spanned by $e_i$ and $e_j$ at $x$ in the ambient manifold $\tilde M^m$. Secondly, by taking the summation in the above equation over the orthonormal frame of the tangent space of $M^n$, and due to $(\ref{10})$, we immediately obtain
\begin{equation}\label{15}
2\tau (T_xM^n)= 2\tilde\tau (T_xM^n) +n^2 ||\vec H||^2-||h||^2,
\end{equation}
where
\begin{equation}\label{16}
 \tilde\tau (T_xM^n)=\sum_{1\le i< j\le n}\tilde K (e_i \wedge e_j)
\end{equation}
denotes the scalar curvature of the n-plane $T_xM^n$ in the ambient manifold $\tilde M^m$.

For a warped product $M^n= N_1\times _fN_2$, let $\varphi : M^n\rightarrow \tilde M^m$ be an isometric immersion of $N_1\times _fN_2$ into an arbitrary Riemannian manifold $\tilde M^m$. As usual, let $h$ be the second fundamental form of $\varphi$. We call the immersion $\varphi$ {\it mixed totally geodesic} if $h(X,Z)=0$ for any $X$ in $\mathcal{D}_1$ and $Z$ in $\mathcal{D}_2$, \cite{2233ee}. In particular, if we denote the restrictions of $h$ to $N_1$ and $N_2$ respectively by $h_1$ and $h_2$, then for $i=1$ and $2$, we call $h_i$ the {\it partial second fundamental form} of $\varphi$. Automatically, the {\it partial mean curvature vectors} $\vec H_1$ and $\vec H_2$ are defined by the following partial traces \footnotemark[\value{footnote}]\footnotetext{Throughout this work, we use the following convention on the range of indices unless otherwise stated, the indices $i,j$ run from 1 to $n$, the lowercase letters $a,b$ from 1 to $n_1$, the uppercase letters $A,B$ from $n_1$ to $n$ and $r$ from $n$ to $m$.}
\begin{equation}\label{N1}
\vec H_1=\frac{1}{n_1}\sum_{a=1}^{n_1}h(e_a,e_a),~~~\vec H_2=\frac{1}{n_2}\sum_{A=n_1+1}^{n_1+n_2}h(e_A,e_A)
\end{equation}
for some orthonormal frame fields $\{e_1,\cdots, e_{n_1}\}$ and $\{e_{n_1+1},\cdots, e_{n_1+n_2}\}$ of $\mathcal{D}_1$ and $\mathcal{D}_2$, respectively. 

This motivation for the following definition may not be evident at this moment, but it will emerge gradually as we prove its natural existence, then imposing it to have profoundly general results, [\cite{pom}, \cite{2233ee}, \cite{yyhh88}, \cite{kkxx77}, \cite{11bbyy}, \cite{aassll}].
\begin{definition}\label{9}
An immersion $\varphi : N_1\times _fN_2\longrightarrow \tilde M^m$ is called $\mathcal{D}_i$-totally geodesic if the partial second fundamental form $h_i$ vanishes identically. If for all $X,~Y\in \mathcal{D}_i$ we have $h(X,~Y)=g(X,~Y){\mathcal K}$ for some normal vector ${\mathcal K}$, then $\varphi$ is called $\mathcal{D}_i$-totally umbilical. It is called $\mathcal{D}_i$-minimal if the partial mean curvature vector $\vec H_i$ vanishes, for $i=1$ or 2.
\end{definition}

For an odd dimensional real $C^\infty$ manifold $\tilde M^{2l+1}$, let $\phi$, $\xi$, $\eta$ and $\tilde g$ be respectively a $(1,~1)$ tensor field, a vector field, a $1$-form and a Riemannian metric on $\tilde M^{2l+1}$ satisfying
\begin{equation}\label{151}
\left.
\begin{aligned}
   \phi^2=-I+\eta\otimes\xi,~~~~~\phi\xi=0,~~~~~\eta\circ\phi=0,~~~~~\eta(\xi) = 1\\ 
   \eta(X)=\tilde g(X, \xi),~~~~~\tilde g(\phi X, \phi Y)=\tilde g(X, Y)-\eta(X)\eta(Y),
\end{aligned}
\right\}
\end{equation}
for any $X,~Y\in \Gamma (T\tilde M^{2l+1})$. Then we call $(\tilde M^{2l+1}, \phi, \xi, \eta, \tilde g)$ an {\it almost contact metric manifold} and $(\phi, \xi, \eta, \tilde g)$ an {\it almost contact metric structure} on $\tilde M^{2l+1}$, [\cite{K},~\cite{B}].

A fundamental $2$-form $\Phi$ is defined on $\tilde M^{2l+1}$ by $\Phi(X,Y)=\tilde g(\phi X,Y)$. An almost contact metric manifold $\tilde M^{2l+1}$ is called a contact metric manifold if $\Phi =\frac{1}{2} d\eta$. If the almost contact metric manifold $(\tilde M^{2l+1},\phi, \xi, \eta, \tilde g)$ satisfies $[\phi, \phi]+2d\eta\otimes \xi=0$, then $(\tilde M^{2l+1},\phi, \xi, \eta, \tilde g)$ turns out to be a {\it normal almost contact manifold}, where the Nijenhuis tensor is defined as
$$[\phi,\phi](X,Y)=[\phi X, \phi Y]+\phi^2[X,Y]-\phi [X,\phi Y]-\phi[\phi X, Y]~~~~~\forall~ X,~Y\in \Gamma(T\tilde M^{2l+1}).$$

For our purpose, we will distinguish four classes of almost contact metric structures; namely, Sasakian, Kenmotsu, cosymplectic and nearly trans-Sasakian structures. At first, an almost contact metric structure is  is said to be {\it Sasakian} whenever it is both contact metric and normal, equivalently [\cite{S}]
\begin{equation}\label{157}
(\tilde\nabla_X\phi)Y=-\tilde g(X,Y)\xi+\eta(Y)X.
\end{equation}

A $2$-plane $\pi$ in $T_x\tilde M^{2l+1}$ of an almost metric manifold $\tilde M^{2l+1}$ is called a $\phi$-section if $\pi\perp\xi$ and $\phi (\pi)=\pi$. Accordingly, we say that $\tilde M^{2l+1}$ is of constant $\phi$-sectional curvature if the sectional curvature $\tilde K(\pi)$ does not depend on the choice of the $\phi$-section $\pi$ of $T_x\tilde M^{2l+1}$ and the choice of a point $x\in \tilde M^{2l+1}$. Based on this preparatory concept, a Sasakian manifold $\tilde M^{2l+1}$ is said to be a {\it Sasakian space form} $\tilde M^{2l+1}(c_S)$, if the $\phi$-sectional curvature is constant $c_S$ along $\tilde M^{2l+1}$. Then the associated Riemannian curvature tensor $\tilde R$ on $\tilde M^{2l+1}(c_S)$ is given by [\cite{K}]
\begin{equation}\label{158}
\tilde R(X,Y;Z,W)=\frac{c_S+3}{4}\biggl\{\tilde g(X,W)\tilde g(Y,Z)-\tilde g(X,Z)\tilde g(Y,W)\biggr\}$$$$-\frac{c_S-1}{4}\biggl\{ \eta (Z) \biggl(\eta (Y) \tilde g(X,W) - \eta (X) \tilde g(Y,W)\biggr) $$$$+ \biggl( \tilde g(Y,Z) \eta (X) - \tilde g(X,Z) \eta (Y)\biggr) \tilde g(\xi , W) $$$$-\tilde g(\phi X,W)\tilde g(\phi Y,Z)+ \tilde g(\phi X,Z)\tilde g(\phi Y,W)+2\tilde g(\phi X,Y)\tilde g(\phi Z,W)\biggr\},
\end{equation}
for any $X,~Y,~Z,~W\in \Gamma (T\tilde M^{2l+1}(c_S)).$ 

 An almost contact metric manifold $\tilde M^{2l+1}$ is called {\it Kenmotsu manifold} [\cite{K}] if 
\begin{equation}\label{155}
(\tilde\nabla_X\phi)Y=\tilde g(\phi X, Y)\xi -\eta(Y)\phi X,
\end{equation}

By analogy with Sasakian manifolds, a Kenmotsu manifold $\tilde M^{2l+1}$ is said to be a {\it Kenmotsu space form} $\tilde M^{2l+1}(c_{Ke})$, if the $\phi$-sectional curvature is constant $c_{Ke}$ along $\tilde M^{2l+1}$, whose Riemannian curvature tensor $\tilde R$ on $\tilde M^{2l+1}(c_{Ke})$ is characterized by [\cite{A}]
\begin{equation}\label{156}
\tilde R(X,Y;Z,W)=\frac{c_{Ke}-3}{4}\biggl\{\tilde g(X,W)\tilde g(Y,Z)-\tilde g(X,Z)\tilde g(Y,W)\biggr\}$$$$-\frac{c_{Ke}+1}{4}\biggl\{ \eta (Z) \biggl(\eta (Y) \tilde g(X,W) - \eta (X) \tilde g(Y,W)\biggr) $$$$+ \biggl( \tilde g(Y,Z) \eta (X) - \tilde g(X,Z) \eta (Y) \biggr) \tilde g(\xi , W) $$$$-\tilde g(\phi X,W)\tilde g(\phi Y,Z)+ \tilde g(\phi X,Z)\tilde g(\phi Y,W)+2\tilde g(\phi X,Y)\tilde g(\phi Z,W)\biggr\},
\end{equation}
for any $X,~Y,~Z,~W\in\Gamma(T\tilde M^{2l+1}(c_{Ke})).$ We notice that Kenmotsu manifolds are normal but not quasi-Sasakian and hence not Sasakian [\cite{B}].

In the case of killing almost contact structure tensors, consider a normal almost contact metric structure $(\phi, \xi,\eta,\tilde g)$ with both $\Phi$ and $\eta$ are closed. Then, such $(\phi, \xi,\eta,\tilde g)$ is called {\it cosymplectic}. Explicitly, cosymplectic manifolds are characterized by normality and the vanishing of Riemannian covariant derivative of $\phi$, i.e.,
\begin{equation}\label{152}
(\tilde\nabla_X\phi)Y=0.
\end{equation}

A cosymplectic manifold $\tilde M^{2l+1}$ is said to be a {\it cosymplectic space form} $\tilde M^{2l+1}(c_c)$, if the $\phi$-sectional curvature is constant $c_c$ along $\tilde M^{2l+1}$ with Riemannian curvature tensor $\tilde R$ expressed by [\cite{B}]
\begin{equation}\label{153}
\tilde R(X,Y;Z,W)=\frac{c_c}{4}\biggl\{\tilde g(X,W)\tilde g(Y,Z)-\tilde g(X,Z)\tilde g(Y,W)$$$$- \eta (Z) \biggl(\eta (Y) \tilde g(X,W) - \eta (X) \tilde g(Y,W)\biggr) - \biggl( \tilde g(Y,Z) \eta (X) - \tilde g(X,Z) \eta (Y) \biggr) \tilde g(\xi , W) $$$$+\tilde g(\phi X,W)\tilde g(\phi Y,Z)- \tilde g(\phi X,Z)\tilde g(\phi Y,W)-2\tilde g(\phi X,Y)\tilde g(\phi Z,W)\biggr\},
\end{equation}
for any $X,~Y,~Z,~W\in \Gamma (T\tilde M^{2l+1}(c_c)).$ Hereafter, we call the almost contact manifold $\tilde M^{2l+1}$ a {\it nearly cosymplectic } manifold if 
\begin{equation}\label{154}
(\tilde \nabla_X\phi)Y+(\tilde \nabla_Y\phi)X=0.
\end{equation}

A submanifold $M^n$ of an almost contact metric manifold $\tilde M^{2l+1}$ is said to be a {\it contact CR-submanifold } if there exist on $M^n$ differentiable distributions $\mathcal{D}_T$ and $\mathcal{D}_\perp$, satisfying the following
\begin{enumerate}
\item [(i)] $TM^n=\mathcal{D}_T\oplus \mathcal{D}_\perp\oplus \langle\xi\rangle$,
\item [(ii)] $\mathcal{D}_T$ is an invariant distribution, i.e., $ \phi (\mathcal{D}_T) \subseteq \mathcal{D}_T$,
\item [(iii)] $\mathcal{D}_\perp$ is an anti-invariant distribution, i.e., $\phi (\mathcal{D}_\perp) \subseteq T^\perp M^n$.
\end{enumerate}

Denote by $\nu$ the maximal $\phi$-invariant subbundle of the normal bundle $T^\perp M^n$. Then it is well-known that the normal bundle $T^\perp M^n$ admits the following decomposition 
\begin{equation}\label{60}
T^\perp M^n=F\mathcal{D}_\perp \oplus \nu.
\end{equation}

In almost contact manifolds $\tilde M^{2m+1}$, the warped product $N_T\times _fN_\perp$ is called a {\it $CR$-warped product} submanifold, if the submanifolds $N_T$ and $N_\perp$ are integral manifolds of $\mathcal{D}_T$ and $\mathcal{D}_\perp$, respectively.

\section{Basic Lemmas}

Now, we turn our attention to almost contact manifolds, we are going to explain the natural existence of $\mathcal{D}_i$-minimal warped product submanifolds in almost contact manifolds, for both $i=1$ and $i=2$. Observe that all almost contact manifolds considered in this thesis satisfy $(\tilde\nabla_\xi \phi)\xi=0$. Hence, it is convenient to state 
\begin{lemma}\label{1551}
Let $M^n$ be a submanifold tangent to the characteristic vector field $\xi$ in an almost contact manifold $\tilde M^{2l+1}$. If $(\tilde\nabla_\xi \phi)\xi=0$ on $\tilde M^{2l+1}$, then $h(\xi,\xi)=0$.
\end{lemma}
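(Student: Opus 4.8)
The plan is to reduce the statement to the single ambient identity $\tilde\nabla_\xi\xi = 0$, from which $h(\xi,\xi)=0$ drops out of the Gauss formula by separating tangential and normal parts. First I would expand the covariant derivative of the tensor field $\phi$ in the direction $\xi$, namely
$(\tilde\nabla_\xi\phi)\xi = \tilde\nabla_\xi(\phi\xi) - \phi(\tilde\nabla_\xi\xi)$. Since $\phi\xi = 0$ by the structure relations \eqref{151}, the first term vanishes identically, so the hypothesis $(\tilde\nabla_\xi\phi)\xi = 0$ collapses to the compact identity $\phi(\tilde\nabla_\xi\xi) = 0$.

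Next I would apply $\phi$ once more to this identity and invoke $\phi^2 = -I + \eta\otimes\xi$ from \eqref{151}. This yields $-\tilde\nabla_\xi\xi + \eta(\tilde\nabla_\xi\xi)\xi = 0$, that is, $\tilde\nabla_\xi\xi = \eta(\tilde\nabla_\xi\xi)\,\xi$, so $\tilde\nabla_\xi\xi$ is proportional to $\xi$. To pin down the coefficient I would use the metric compatibility of $\tilde\nabla$ together with $\tilde g(\xi,\xi) = \eta(\xi) = 1$: differentiating the constant $\tilde g(\xi,\xi)$ along $\xi$ gives $2\,\tilde g(\tilde\nabla_\xi\xi,\xi) = 0$, hence $\eta(\tilde\nabla_\xi\xi) = \tilde g(\tilde\nabla_\xi\xi,\xi) = 0$. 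Therefore $\tilde\nabla_\xi\xi = 0$.

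Finally, since $\xi$ is tangent to $M^n$ by assumption, I may set $X = Y = \xi$ in the Gauss formula \eqref{3} to obtain $0 = \tilde\nabla_\xi\xi = \nabla_\xi\xi + h(\xi,\xi)$. Here $\nabla_\xi\xi$ is tangent to $M^n$ while $h(\xi,\xi)$ is normal, and a tangent vector can equal the negative of a normal vector only if both vanish; hence $h(\xi,\xi) = 0$ (and incidentally $\nabla_\xi\xi = 0$). The computation is elementary throughout; the only step requiring genuine care is the passage from $\phi(\tilde\nabla_\xi\xi)=0$ to $\tilde\nabla_\xi\xi \parallel \xi$, where the kernel of $\phi$ must be identified with the line $\langle\xi\rangle$ through $\phi^2 = -I + \eta\otimes\xi$ rather than taken for granted.
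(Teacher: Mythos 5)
Your proof is correct. The paper states Lemma \ref{1551} without proof at all, so there is nothing to diverge from; your argument --- using $\phi\xi=0$ to collapse the hypothesis $(\tilde\nabla_\xi\phi)\xi=0$ to $\phi(\tilde\nabla_\xi\xi)=0$, applying $\phi^2=-I+\eta\otimes\xi$ to conclude $\tilde\nabla_\xi\xi$ is proportional to $\xi$, eliminating the coefficient via metric compatibility and $\tilde g(\xi,\xi)=1$, and then splitting the Gauss formula \eqref{3} into tangential and normal parts --- is precisely the standard argument the authors implicitly rely on, and every step, including your care in identifying $\ker\phi$ with $\langle\xi\rangle$ rather than assuming it, is valid.
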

Beginning with Sasakian manifolds, we call a warped product of type $M^n=N_T\times _fN_\perp$, a contact CR- warped product submanifold. 
\begin{corollary}\label{186}
Let $M^n=N_T\times _fN_\perp$ be a contact CR- warped product submanifold in a Sasakian manifold $\tilde {M}^{2l+1}$ such that $\xi$ is tangent to the first factor. Then, the following hold
\begin {enumerate}
\item [(i)] $h(X,\xi)=0$;
\item [(ii)] $g(h(X,X), FZ)=0$;
\item [(iii)] $g(h(X,X),\zeta)=-g(h(\phi X, \phi X), \zeta)$,
\end {enumerate}
for every $X\in \Gamma(TN_T)$, $Z\in \Gamma(TN_\perp)$ and $\zeta \in \Gamma(\nu)$. 
\end{corollary}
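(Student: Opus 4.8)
The plan is to exploit the Sasakian structure equation \eqref{157} together with the Gauss and Weingarten formulas \eqref{3}, \eqref{4} to compute the relevant covariant derivatives, and then extract the second fundamental form via the relation \eqref{5}. For part (i), I would start from $\tilde\nabla_X\xi$. Differentiating $\xi$ and using $(\tilde\nabla_X\phi)\xi = -\tilde g(X,\xi)\xi + \eta(\xi)X = X - \eta(X)\xi$ together with $\phi\xi = 0$ and the Weingarten formula, one obtains a standard identity such as $\tilde\nabla_X\xi = -\phi X$ (or its analogue) for Sasakian manifolds. Comparing tangential and normal parts of $\tilde\nabla_X\xi$, the normal component forces $h(X,\xi)=0$ once we note $\phi X$ is tangent for $X\in\Gamma(TN_T)$ and invoke Lemma \ref{1551} to handle the $\xi$-direction. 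This is the cleanest piece and sets up the bookkeeping for the rest.

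For part (ii), the natural route is to compute $g(\tilde\nabla_X \phi X, FZ)$ in two ways, where $FZ = \phi Z$ is the normal part of $\phi Z$ for $Z\in\Gamma(TN_\perp)$. Using \eqref{157} I would expand $\tilde\nabla_X(\phi X) = (\tilde\nabla_X\phi)X + \phi(\tilde\nabla_X X)$, and pair against $FZ$. The term $(\tilde\nabla_X\phi)X = -\tilde g(X,X)\xi + \eta(X)X$ is tangent to $M^n$, hence orthogonal to the normal vector $FZ$, so it drops out. The remaining term $g(\phi\tilde\nabla_X X, FZ)$ I would rewrite using the almost-contact compatibility $\tilde g(\phi U,\phi V)=\tilde g(U,V)-\eta(U)\eta(V)$ and the fact that $\phi FZ = -Z + \eta(Z)\xi = -Z$ lies in $\mathcal{D}_\perp$. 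This should convert the expression into an inner product of the warped-product connection with a vector in $\mathcal{D}_\perp$, which vanishes because on a $CR$-warped product the $\mathcal{D}_T$-directions are totally geodesic leaves and $\nabla_X X$ has no $\mathcal{D}_\perp$-component when $\xi$ is tangent to $N_T$. Equivalently one shows $g(h(X,X),FZ) = -g(\tilde\nabla_X X,\phi Z)$ reduces to a warping-function term that cancels by symmetry.

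For part (iii), the key input is the invariance $\phi(\mathcal{D}_T)\subseteq\mathcal{D}_T$ and the $\phi$-invariance of $\nu$, so that $\phi\zeta\in\Gamma(\nu)$ and $\phi^2 X = -X$ for $X\in\Gamma(TN_T)$ (since $\eta(X)=0$ there). I would compute $g(h(X,X),\zeta)$ by writing it as $g(\tilde\nabla_X X,\zeta)$ via \eqref{3}, then apply the identity $\tilde g(\phi U,\phi V)=\tilde g(U,V)$ on $\mathcal{D}_T\cup\nu$ to transfer $\phi$ onto both arguments: $g(\tilde\nabla_X X,\zeta) = g(\phi\tilde\nabla_X X,\phi\zeta)$. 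Expanding $\phi\tilde\nabla_X X = \tilde\nabla_X(\phi X) - (\tilde\nabla_X\phi)X$ and discarding the tangent term $(\tilde\nabla_X\phi)X$ against the normal $\phi\zeta$, this becomes $g(\tilde\nabla_X(\phi X),\phi\zeta) = g(h(X,\phi X),\phi\zeta)$; a parallel computation replacing $X$ by $\phi X$ and using $\phi^2 X = -X$ yields the sign-reversed relation $g(h(\phi X,\phi X),\zeta)=-g(h(X,X),\zeta)$.

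The main obstacle I anticipate is the careful tracking of which covariant-derivative terms are tangent versus normal, and ensuring the $\eta$-corrections from \eqref{157} and \eqref{151} genuinely vanish in each pairing; the $\xi$-direction must be isolated and dispatched by Lemma \ref{1551} wherever $X$ could carry a $\xi$-component, but since $X\in\Gamma(TN_T)$ with $\xi$ tangent to $N_T$ one must verify the computations for $X\perp\xi$ and then extend, using part (i) to absorb any residual $\xi$-terms. The structural reason the signs and cancellations work is precisely the Sasakian identity combined with the $\phi$-invariance of $\mathcal{D}_T$ and $\nu$, so the proof is essentially a disciplined bookkeeping argument rather than a deep estimate.
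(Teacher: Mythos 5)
Your proposal is correct and follows essentially the same route as the paper's proof: part (i) is the same computation of $\tilde\nabla_X\xi$ from the Sasakian identity \eqref{157} (with the paper's sign convention one actually gets $\phi X=\nabla_X\xi+h(X,\xi)$, so only your sign differs harmlessly), and part (iii) is exactly the paper's two-step argument --- pair the expanded Gauss formula with $\phi\zeta$, then interchange $X\leftrightarrow\phi X$ and use part (i) to kill the residual terms; note only that your parenthetical ``$\eta(X)=0$ there'' is false in general since $\xi$ is tangent to $N_T$, but your closing remark about splitting off the $\xi$-component and dispatching it via part (i) and Lemma \ref{1551} is precisely the correct fix, and is in effect what the paper does by retaining the $\eta(X)$-term. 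For part (ii) the paper merely cites the literature, whereas your totally-geodesic-leaves argument proves it directly and is sound, up to two small bookkeeping points: the sign is $g(h(X,X),FZ)=+\,g(\tilde\nabla_XX,\phi Z)$, and your displayed computation as written yields $g(h(X,\phi Y),FZ)=0$ for all $X,Y\in\Gamma(TN_T)$, from which $g(h(X,X),FZ)=0$ follows after one substitution $Y\mapsto\phi Y$ using $\phi^2Y=-Y+\eta(Y)\xi$ and part (i).
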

\begin{proof}
From $(\ref{157})$ we obtain
$$X-\eta(X)\xi=-\phi\nabla_X\xi-\phi h(X,\xi).$$
Applying $\phi$ on the above equation, taking into consideration $\eta(\nabla_X\xi)=0$, then it yields
$$\phi X=\nabla_X\xi+ h(X,\xi).$$
By comparing the tangential and normal terms in the above equation we get $(i)$. $(ii)$ is well-known (for example, see [\cite{Mi}], [\cite{M}]). For the last part, we take an arbitrary $\zeta \in \Gamma(\nu)$, then by making use of (\ref{157}) and (\ref{3}), we obtain
$$\nabla_X\phi X+h(\phi X,X)-\phi \nabla_XX-\phi h(X,X)=-g(X,X)\xi + \eta (X)X,$$
taking the inner product with $\phi \zeta$ in the above equation, we deduce 
\begin{equation}\label{1028}
g(h(\phi X,X), \phi \zeta)-g(h(X,X), \zeta)=0,
\end{equation}
interchanging $X$ with $\phi X$ in (\ref{1028}), gives
$$g(h(\phi X,\phi X), \zeta)=g(h(\phi (\phi X), \phi X), \phi \zeta)= g(\tilde \nabla_{\phi X}\phi (\phi X), \phi \zeta)$$$$~~~~~~~~~~~~~~~= -g(\tilde\nabla_{\phi X} X, \phi \zeta)+g(\tilde\nabla_{\phi X} (\eta(X)\xi), \phi \zeta)$$$$~~~~~~~~~~~~~~~~~= - g(h(X, \phi X), \phi \zeta)+\eta(X)g(\tilde\nabla_{\phi X} \xi, \phi \zeta)
$$$$~~~~~~~~~~~~~~~~~~~~~= - g(h(X, \phi X), \phi \zeta)+\eta(X)g(h(\phi X, \xi), \phi \zeta).$$
Making use of statement $(i)$ in the above equation, we reach that
\begin{equation}\label{-1}
g(h(\phi X,\phi X), \zeta)= - g(h(X, \phi X), \phi \zeta).
\end{equation}
From $(\ref{1028})$ and $(\ref{-1})$, we obtain statement $(iii)$.
\end{proof}

The following two direct, but significant, results are two other key lemma for this section that will be used later as well. 

\begin{lemma}\label{firstlemma}
Let $\varphi :M^n=N_1\times _fN_2 \longrightarrow \tilde M^m$ be an isometric immersion of an $n$-dimensional warped product submanifold $M^n$ into a Riemannian manifold $\tilde M^m$. Then, we have
\begin{equation}\label{85}
\tau \bigl(T_xM^n\bigr)=\frac{n_2 \Delta f}{f}+\sum_{r=n+1}^{m}\biggl\{ \sum_{1\le a <b \le n_1} \biggl(h_{aa}^r h_{bb }^r-\bigl(h_{a b}^r\bigr)^2\biggr)~~~~~~~~~~~~~~~~~$$$$~~~~~~~~~~~~~~~~~~~~~~~~~~~~~~~~~~~~~~~~~+ \sum_{n_1+1\le A <B\le n}\biggl(h_{AA}^rh_{BB}^r-\bigl(h_{AB}^r\bigr)^2\biggr)\biggr\}
+~\tilde\tau \bigl(T_xN_1\bigr)+\tilde \tau \bigl(T_xN_2\bigr),~~~
\end{equation}
where $n_1,~n_2, ~n$ and $m$ are the dimensions of $N_1,~N_2, ~M^n$ and $\tilde M^m$, respectively.
\end{lemma}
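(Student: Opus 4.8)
The plan is to compute the scalar curvature $\tau(T_xM^n)$ by decomposing the orthonormal frame according to the two factors of the warped product and applying the Gauss equation \eqref{14} pairwise. Recall that $\tau(T_xM^n)=\sum_{1\le i<j\le n} K(e_i\wedge e_j)$. Splitting the index set $\{1,\dots,n\}$ into the block $\{1,\dots,n_1\}$ tangent to $N_1$ and the block $\{n_1+1,\dots,n\}$ tangent to $N_2$, I would partition this sum into three pieces: pairs $(a,b)$ with both indices in the first block, pairs $(A,B)$ with both in the second block, and mixed pairs $(a,A)$ with one index in each block. This gives
\begin{equation}\label{plan-split}
\tau(T_xM^n)=\sum_{1\le a<b\le n_1}K(e_a\wedge e_b)+\sum_{n_1+1\le A<B\le n}K(e_A\wedge e_B)+\sum_{a=1}^{n_1}\sum_{A=n_1+1}^{n}K(e_a\wedge e_A).
\end{equation}

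Next I would apply the Gauss equation \eqref{14} to each of the three groups, writing $K(e_i\wedge e_j)=\tilde K(e_i\wedge e_j)+\sum_{r=n+1}^m\bigl(h_{ii}^r h_{jj}^r-(h_{ij}^r)^2\bigr)$. For the two same-block sums the sectional curvatures $\tilde K(e_a\wedge e_b)$ and $\tilde K(e_A\wedge e_B)$ assemble into $\tilde\tau(T_xN_1)$ and $\tilde\tau(T_xN_2)$ respectively, via definition \eqref{16}, and their extrinsic terms are exactly the two double sums over $r$ appearing in \eqref{85}. The crucial observation for the mixed block is that the intrinsic mixed sum $\sum_{a}\sum_{A}K(e_a\wedge e_A)$ is governed by the warped product structure: the formula \eqref{24} relating sectional curvature to the warping function is stated for the ambient $\tilde M^m=N_1\times_f N_2$, and the analogous identity for the immersed warped product $M^n=N_1\times_f N_2$ gives $\sum_{a=1}^{n_1}\sum_{A=n_1+1}^{n}K(e_a\wedge e_A)=\tfrac{n_2\Delta f}{f}$, where the Laplacian and the warping function are computed intrinsically on $M^n$. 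Substituting these three evaluations into \eqref{plan-split} yields exactly \eqref{85}.

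The main obstacle I anticipate is justifying the mixed-term identity $\sum_{a}\sum_{A}K(e_a\wedge e_A)=\tfrac{n_2\Delta f}{f}$ carefully, since \eqref{24} as quoted is an identity about the sectional curvatures of a warped product manifold in terms of its own intrinsic warping data. The point is that $M^n$ itself carries the warped product Riemannian metric $g=g_{N_1}+f^2 g_{N_2}$, so its intrinsic curvature tensor $R$ obeys the standard warped product curvature relations regardless of the immersion; applying \eqref{24} with the intrinsic connection $\nabla$ and intrinsic Laplacian $\Delta$ on $M^n$ is therefore legitimate. One must be slightly attentive that $\Delta f$ here is the Laplacian of $f$ as a function on $M^n$ (equivalently on the base $N_1$), and that the curvatures in the mixed sum are the \emph{intrinsic} sectional curvatures $K$, not the ambient $\tilde K$; no Gauss equation is invoked for the mixed block, only the warped product identity.

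Once this identity is in hand the remaining steps are purely bookkeeping: collecting the ambient sectional curvatures from the two same-block groups into $\tilde\tau(T_xN_1)+\tilde\tau(T_xN_2)$ and collecting the second fundamental form coefficients into the two displayed double sums over $r$. I would present the computation by displaying \eqref{plan-split}, then substituting the Gauss equation into the first two sums and the warped product relation into the third, and finally regrouping terms to match the right-hand side of \eqref{85}.
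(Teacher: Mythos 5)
Your proposal is correct and takes essentially the same route as the paper's own proof: the paper likewise splits $\tau(T_xM^n)$ into the two same-block sums and the mixed sum, evaluates the mixed block by the intrinsic warped product identity $\sum_{a}\sum_{A}K(e_a\wedge e_A)=\frac{n_2\Delta f}{f}$ (applied to $M^n$ with its own warped metric, exactly the subtlety you flag), and converts each same-block sum via the Gauss equation into $\tilde\tau(T_xN_i)$ plus the corresponding second-fundamental-form terms. The only difference is cosmetic ordering: the paper first reduces to $\tau(T_xM^n)=\frac{n_2\Delta f}{f}+\tau(T_xN_1)+\tau(T_xN_2)$ and then applies Gauss blockwise, whereas you substitute the Gauss equation into the blocks directly.
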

\begin{proof}
From the definition of the scalar curvature, we have 
\begin{equation}\label{}
\tau \bigl(T_xM^n\bigr)=\sum_{1\le i<j\le n} K_{ij}=\sum_{a=1}^{n_1} \sum_{A=n_1+1}^{n}K_{aA}+\sum_{1\le a<b\le n_1} K_{ab}+\sum_{n_1+1\le A<B\le n} K_{AB}.
\end{equation}
Now, we recall the following well-known relation
\begin{equation}\label{24}
\sum_{a=1}^{n_1}\sum_{A=n_1+1}^{n} K(e_a\wedge e_A)=\frac{n_2\Delta f}{f},
\end{equation}
where $\{e_1,\cdots,e_{n_1}, e_{n_1+1}, \cdots, e_n\}$ are local fields of orthonormal frame of $\Gamma (TM^n)$ such that $n_1$, $n_2$ and $n$ are the dimensions of $N_1$, $N_2$ and $M^n$, respectively.
 Combining the above two equations, it yields
\begin{equation}\label{jfle}
\tau \bigl(T_xM^n\bigr)=\frac{n_2 \Delta f}{f}
+\tau \bigl(T_xN_1\bigr)+ \tau \bigl(T_xN_2\bigr).
\end{equation}
It is direct to write
\begin{equation}\label{,qod}
\tau \bigl(T_xN_1\bigr)=\sum_{r=n+1}^{m} \sum_{1\le a <b \le n_1} \biggl(h_{aa}^r h_{bb }^r-\bigl(h_{a b}^r\bigr)^2\biggr)+\tilde\tau \bigl(T_xN_1\bigr),
\end{equation}
and
\begin{equation}\label{migjk}
\tau \bigl(T_xN_2\bigr)=\sum_{r=n+1}^{m} \sum_{n_1+1\le A <B\le n}\biggl(h_{AA}^rh_{BB}^r-\bigl(h_{AB}^r\bigr)^2\biggr)+\tilde \tau \bigl(T_xN_2\bigr).
\end{equation}
By joining $(\ref{jfle})$, $(\ref{,qod})$ and $(\ref{migjk})$ together, we get the result.
\end{proof}

\begin{lemma}\label{two}
Let $\varphi$ be a $\mathfrak{D}_2$-minimal isometric immersion of a warped product submanifold $M^n=N_1\times _fN_2$ into any Riemannian manifold $\tilde {M}^m$. If $N_2$ is totally umbilical in $\tilde M^m$, then $\varphi$ is $\mathfrak{D}_2$-totally geodesic.
\end{lemma}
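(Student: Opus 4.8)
The plan is to read off both hypotheses as pointwise conditions on the second fundamental form restricted to $\mathcal{D}_2$ and then simply take the partial trace. First I would unpack the assumption that $N_2$ is totally umbilical: in the language of Definition \ref{9} this is the $\mathcal{D}_2$-totally umbilical condition, so there exists a normal vector field $\mathcal{K}$ with $h(X,Y)=g(X,Y)\mathcal{K}$ for every $X,Y\in\mathcal{D}_2$. The $\mathcal{D}_2$-minimality hypothesis, on the other hand, says precisely that the partial mean curvature vector $\vec H_2$ defined in $(\ref{N1})$ vanishes. The goal is then to show that these two conditions together force the partial second fundamental form $h_2$ to vanish identically.

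The single computation that does the work is the partial trace over an orthonormal frame $\{e_{n_1+1},\dots,e_{n_1+n_2}\}$ of $\mathcal{D}_2$. Substituting the umbilic form into $(\ref{N1})$ gives
$$\vec H_2=\frac{1}{n_2}\sum_{A=n_1+1}^{n_1+n_2} h(e_A,e_A)=\frac{1}{n_2}\sum_{A=n_1+1}^{n_1+n_2} g(e_A,e_A)\,\mathcal{K}=\mathcal{K},$$
where the last equality uses that the frame is orthonormal with respect to the induced metric, so each $g(e_A,e_A)=1$ and the $n_2$ identical terms collapse. Invoking $\mathcal{D}_2$-minimality, $\vec H_2=0$, I conclude $\mathcal{K}=0$. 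Feeding this back into the umbilic identity yields $h(X,Y)=g(X,Y)\mathcal{K}=0$ for all $X,Y\in\mathcal{D}_2$, i.e. $h_2\equiv 0$, which is exactly the assertion that $\varphi$ is $\mathcal{D}_2$-totally geodesic.

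There is essentially no deep obstacle here; the argument is a one-line trace identity. The only point that warrants a moment's care is the bookkeeping with the warped-product metric: although the fiber metric on $N_2$ carries the conformal factor $f^2$, the frame $\{e_A\}$ is chosen orthonormal for the induced metric $g$ on $M^n$, so $g(e_A,e_A)=1$ and no factor of $f$ enters the trace. One should also confirm that $\mathcal{K}$ is a single common umbilicity direction shared by all pairs from $\mathcal{D}_2$ — which is exactly what the totally umbilical hypothesis supplies — so that it may legitimately be pulled outside the summation.
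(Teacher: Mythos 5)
Your trace computation is exactly the paper's closing step, but there is one point where you are too quick, and it is precisely the point where the warped-product structure enters. The hypothesis ``$N_2$ is totally umbilical in $\tilde M^m$'' is \emph{not} literally the $\mathcal{D}_2$-totally umbilical condition of Definition \ref{9}: it is a statement about the second fundamental form $\hat h$ of $N_2$ as a submanifold of $\tilde M^m$, which takes values in the full normal bundle of $N_2$ inside $T\tilde M^m$ --- including directions tangent to $M^n$ --- whereas Definition \ref{9} concerns $h$, the second fundamental form of $\varphi: M^n \rightarrow \tilde M^m$, valued in $T^\perp M^n$. So the umbilic relation $h(X,Y)=g(X,Y)\mathcal{K}$ with $\mathcal{K}$ normal to $M^n$, which your trace argument consumes, is a consequence that must be extracted from the hypothesis, not a restatement of it. The paper does this via the composition formula $\hat h(Z,W)=\check h(Z,W)+h(Z,W)$, where $\check h$ is the second fundamental form of $N_2$ in $M^n$, together with the warped-product identity $\check h(Z,W)=-\bigl(g(Z,W)/f\bigr)\nabla f$; this gives $h(Z,W)=g(Z,W)\bigl(\Psi+\nabla(\ln f)\bigr)$, and it is $\Psi+\nabla(\ln f)$ that plays the role of your $\mathcal{K}$.

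The gap is small and easily filled: since $h(Z,W)$ is the $T^\perp M^n$-component of $\hat h(Z,W)$, projecting the umbilic relation $\hat h(Z,W)=g(Z,W)\hat{\mathcal{K}}$ onto $T^\perp M^n$ yields $h(Z,W)=g(Z,W)\hat{\mathcal{K}}^{\perp}$ with $\hat{\mathcal{K}}^{\perp}$ normal to $M^n$, which is the statement you assumed. With that sentence inserted, your proof is correct and coincides with the paper's: substitute into $(\ref{N1})$, trace over an orthonormal frame of $\mathcal{D}_2$, invoke $\vec H_2=0$ to conclude $\mathcal{K}=0$ (the paper's $\Psi=-\nabla(\ln f)$), and hence $h$ vanishes on $\mathcal{D}_2\times\mathcal{D}_2$. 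Note also that your closing remark gets the logic slightly backwards: the existence of a single common umbilicity direction for $h$ on $\mathcal{D}_2$ is supplied by the hypothesis \emph{together with} the automatic umbilicity of the fibers in $M^n$, not by the hypothesis alone.
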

\begin{proof}
Let $\check{h}$ and $\hat {h}$ denote the second fundamental forms of $N_2$ in $M^n$ and $\tilde M^m$, respectively. Then for every vector fields $Z$ and $W$ tangent to $N_2$ we have
\begin{equation}
h(Z,W)=\hat {h}(Z,W)-\check{h}(Z,W),
\end{equation}
and 
\begin{equation}
\check{ h}(Z,W)= - \bigl(g(Z, W)/f\bigr) \nabla (f).
\end{equation}
Notice that, for every warped product the leaves are totally geodesic and the fibers are totally umbilical. Taking in consideration this fact and our  hypothesis guarantees that $N_2$ is totally umbilical in both $M^n$ and $\tilde M^m$. Considering this fact with the above two equations, we deduce that
\begin{equation}\label{lsi}
h(Z,W)=g(Z,W) (\Psi+\nabla (\ln f)), 
\end{equation}
for some vector field $\Psi\in\Gamma(T\tilde M^m)$ such that $\Psi$ is normal to $\Gamma(TN_2)$.
Considering the local field of orthonormal frames as in the above proof. Then, taking the summation over the orthonormal frame fields of $\Gamma (TN_2)$ in the above equation, we get
$$\sum_{A,B=n_1+1}^{n}h(e_A,e_B)=\sum_{A,B=n_1+1}^{n}g(e_A,e_B) (\Psi+\nabla (\ln f)).$$
Taking into account $\mathfrak{D}_2$-minimality of $\varphi$, the left hand side of the above equation vanishes and we get
$$0=n_2 ~(\Psi+\nabla (\ln f)).$$
Since $N_2$ is not empty, we obtain
$$\Psi =-\nabla(\ln f).$$
Making use of the above equation in (\ref{lsi}), we obtain
$$h(Z,W)=0,$$
for every vector fields $Z, W\in \Gamma(TN_2).$ Meaning that; $\varphi$ is $\mathfrak{D}_2$-totally geodesic. This completes the proof.
\end{proof}

\section{$\mathcal{D}_T$-Minimality of Warped Product $CR$-Submanifolds in Kahler Manifolds}

Recently, it was proven that $\mathcal{D}_T$-minimality is possessed by a wide class of warped product submanifolds, some of these warped product submanifolds were shown to have this geometric property in \cite{11bbyy}, \cite{aassll}.

In the sense of Definition \ref{9}, we are going to show the natural existence of $\mathcal{D}_T$-minimal warped product $CR$-submanifolds in both Kahler and nearly Kahler manifolds.

Secondly, we provide the next key result which will be referred to frequently during this section.
\begin{lemma}\label{1B} 
Let $M^n=N_T\times _fN_\perp$ be a contact $CR$-warped product submanifold in Sasakian manifolds $\tilde M^{2l+1}$ such that $\xi$ is tangent to $N_T$. Then, $M^n$ is $\mathcal{D}_1$-minimal warped product, where $\mathcal{D}_1=\mathcal{D}_T\oplus\langle\xi\rangle$.
\end{lemma}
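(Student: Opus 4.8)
The plan is to show that the partial mean curvature vector $\vec H_1$ associated to the distribution $\mathcal{D}_1 = \mathcal{D}_T \oplus \langle \xi \rangle$ vanishes, which by Definition \ref{9} is exactly the assertion of $\mathcal{D}_1$-minimality. To this end I would choose an adapted orthonormal frame for $\mathcal{D}_T$ of the form $\{e_1, \phi e_1, e_2, \phi e_2, \dots, e_p, \phi e_p\}$, together with $\xi$, so that $\mathcal{D}_1$ is spanned by these vectors; this is possible precisely because $\mathcal{D}_T$ is $\phi$-invariant (property (ii) of a contact $CR$-submanifold). Writing $\vec H_1 = \frac{1}{n_1}\sum_{a=1}^{n_1} h(e_a,e_a)$, the goal reduces to showing this trace is zero.

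The key computational input comes directly from Corollary \ref{186}, which I would assume as already established. The trace over $\mathcal{D}_T$ splits into pairs $h(e_j, e_j) + h(\phi e_j, \phi e_j)$, and I would test $\vec H_1$ against the two types of normal vectors that span $T^\perp M^n$ according to the decomposition \eqref{60}, namely $\zeta \in \Gamma(\nu)$ and $FZ$ for $Z \in \Gamma(TN_\perp)$. For $\zeta \in \Gamma(\nu)$, statement (iii) of Corollary \ref{186} gives $g(h(e_j,e_j),\zeta) = -g(h(\phi e_j, \phi e_j),\zeta)$, so each paired contribution cancels. For the $FZ$ component, statement (ii) of Corollary \ref{186} gives $g(h(X,X),FZ)=0$ for every $X \in \Gamma(TN_T)$, killing the $\mathcal{D}_T$ part of the trace along $F\mathcal{D}_\perp$. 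Finally, the $\xi$-term is handled by statement (i), which gives $h(X,\xi)=0$, and in particular $h(\xi,\xi)=0$ by Lemma \ref{1551}. Collecting these, every normal component of $\sum_{a=1}^{n_1} h(e_a,e_a)$ vanishes, so $\vec H_1 = 0$.

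The one point requiring care is the contribution of $h(\xi,\xi)$ to the partial trace: $\xi$ belongs to $\mathcal{D}_1$ but not to $\mathcal{D}_T$, so it must be included separately in the sum defining $\vec H_1$. Lemma \ref{1551} disposes of it since all the ambient structures considered satisfy $(\tilde\nabla_\xi \phi)\xi = 0$. I would also verify that the polarized off-diagonal identity \eqref{1028} is not needed here, only the diagonal statements (i)--(iii), since we are tracing. The main obstacle, to the extent there is one, is bookkeeping: ensuring that the frame is genuinely compatible with the splitting $\mathcal{D}_T \oplus \langle\xi\rangle$ and that the cancellation in (iii) pairs up the frame correctly so that no term is double-counted or omitted. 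Once the frame is fixed and the normal bundle decomposition \eqref{60} is invoked to reduce testing against $\nu$ and $F\mathcal{D}_\perp$ separately, the conclusion follows immediately.
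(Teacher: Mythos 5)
Your proposal is correct and takes essentially the same route as the paper's proof: an adapted frame pairing $e_j$ with $\phi e_j$ on $\mathcal{D}_T$ together with $\xi$, the normal-bundle splitting \eqref{60}, part (ii) of Corollary \ref{186} to annihilate the $F\mathcal{D}_\perp$ components of the trace, and part (iii) to cancel the $\nu$ components pairwise. If anything you are slightly more careful than the paper, which handles the $h(\xi,\xi)$ contribution only implicitly (via Lemma \ref{1551}) and whose in-proof citations of the corollary's parts are off by one (it invokes (i) and (ii) where the computations actually use (ii) and (iii)).
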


\begin{proof}
Consider the following local field of orthonormal frames of the Kahler manifold $\tilde M^{2m}$:
$\{\xi, e_1, \cdots, e_s, e_{s+1}=\phi e_1, \cdots, e_{n_1}=e_{2s}=\phi e_s, e_{n_1+1}=e^\star_1,\cdots, e_{n_1+n_2}=e_n=e^\star_q, e_{n+1}=\phi e^\star_1, \cdots, e_{n+n_2}=\phi e^\star_q, e_{n+n_2+1}=\bar e_1, \cdots, e_{2m}=\bar e_{2l=\gamma}\}$ such that $\{e_1, \cdots, \\e_s, e_{s+1}=\phi e_1, \cdots, e_{n_1}=e_{2s}=\phi e_s\}$, $\{e_{n_1+1}=e^\star_1,\cdots, e_{n_1+n_2}=e_n=e^\star_q\}$, $\{e_{n+1}=\phi e^\star_1, \cdots, e_{n+n_2}=\phi e^\star_q\}$ and $\{e_{n+n_2+1}=\bar e_1, \cdots, e_{2m}=\bar e_{2l=\gamma}\}$ are the local fields of orthonormal frames of $\Gamma(TN_T)$, $\Gamma (TN_\perp)$, $\Gamma (JTN_\perp)$ and $\Gamma (\nu)$, respectively.

Using the terminology in (\ref{13}), it is straightforward to have
$$\sum_{r=n+1}^{2m}\sum_{a=1}^{n_1}h_{aa}^r= \sum_{r=n+1}^{2m}\bigl(h_{11}^r+ \cdots +h_{n_1n_1}^r \bigr).$$

In view of (\ref{60}), the right hand side summation can be decomposed as
$$\sum_{r=n+1}^{2m}\sum_{a=1}^{n_1}h_{aa}^r= \sum_{r=n+1}^{2m-\gamma}\bigl(h_{11}^r+ \cdots +h_{n_1n_1}^r \bigr)+ \sum_{r=n+1+q}^{2m}\bigl(h_{11}^r+ \cdots +h_{n_1n_1}^r \bigr).$$

Taking into account part $(i)$ of Corollary \ref{186}, the first summation on the right hand side of the above equation vanishes, whereas we expand the other summation in view of the above orthonormal frames to get
$$\sum_{r=n+1}^{2m}\sum_{a=1}^{n_1}h_{aa}^r= \sum_{r=n+1+q}^{2m}\bigl(h_{11}^r+ \cdots +h_{ss}^r+h_{s+1s+1}^r+ \cdots + h_{2s2s}^r \bigr).$$

Equivalently,
$$\sum_{r=n+1}^{2m}\sum_{a=1}^{n_1}h_{aa}^r= \sum_{r=n+1+q}^{2m}\biggl( g(h(e_1,e_1),e_r)+ \cdots +g(h(e_s, e_s), e_r)$$$$~~~~~~~~~~~~~~~~~~~~~~~~~~~~~~~+g(h(J e_1, J e_1), e_r)+ \cdots + g(h(J e_s, J e_s), e_r)\biggr).$$

Now, if we apply part $(ii)$ of Corollary \ref{186} on the above equation, then it automatically gives 
$$\sum_{r=n+1}^{2m}\sum_{a=1}^{n_1}h_{aa}^r= \sum_{r=n+1+q}^{2m} \biggl(g(h(e_1,e_1),e_r)+ \cdots +g(h(e_s, e_s), e_r)$$$$~~~~~~~~~~~~~~~~~~~~~~~~-g(h(e_1,  e_1), e_r)- \cdots - g(h( e_s,  e_s), e_r)\biggr)$$$$=0.~~~~~~~~~~~~~~~~~~~~~~~~~
~~~~~~~~~~~~~~~~~~~~~~~~$$

Clearly, this proves the vanishing of the coefficients $h_{aa}^r$ under summation, for $a\in \{1,\cdots, n_1\}$ and $r\in \{n+1, \cdots, 2m\}.$ Therefore, the partial mean curvature vector $\vec H$ defined in (\ref{N1}) does vanish. Hence, in the sense of Definition \ref{9}, we get the assertion.
\end{proof}

 \begin{remark}\label{104}
Putting $\mathcal{D}_1=\mathcal{D}_T$, then by following the above scheme typically one can show that warped product submanifolds of the type $M^n=N_T\times _fN_\perp$, are $\mathcal{D}_1$-minimal in nearly Kahelr manifolds.
\end{remark}

\section{A General Inequality for Warped Product $CR$-Submanifolds in Kahler Manifolds}

By making use of the Gauss equation, we construct a new general inequality for $\mathcal{D}_T$-minimal warped product $CR$-submanifolds in arbitrary Kahler manifolds. This inequality generalizes all inequalities in [\cite{2211gg}].

Now, we present the main theorem of this article. 
\begin{theorem}\label{299}
Let $\varphi :M^n=N_T\times _fN_\perp \longrightarrow \tilde M^m$ be an isometric immersion of a warped product $CR$-submanifold $M^n$ into a Kahler manifold $\tilde {M}^m$. Then, we have
 \begin{enumerate}
\item[(i)]$\frac{1}{2}||h||^2\ge \tilde \tau (T_xM^n)-\tilde \tau (T_xN_T)-\tilde \tau (T_xN_\perp)-\frac {n_2 \Delta f}{f}.$
\item[(ii)] The equality in (i) holds identically if and only if $N_T$, $N_\perp$ and $M^n$ are totally geodesic, totally umbilical and minimal submanifolds in $\tilde M^m$, respectively.
\end{enumerate}
\end{theorem}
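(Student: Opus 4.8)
The plan is to start from Lemma \ref{firstlemma}, which already expresses the intrinsic scalar curvature $\tau(T_xM^n)$ of the warped product in terms of the partial sums of second fundamental form coefficients over the two factors, the warping term $\frac{n_2\Delta f}{f}$, and the ambient scalar curvatures $\tilde\tau(T_xN_1)$ and $\tilde\tau(T_xN_2)$ (with $N_1=N_T$, $N_2=N_\perp$). The idea is to combine this with the global Gauss relation \eqref{15}, namely $2\tau(T_xM^n)=2\tilde\tau(T_xM^n)+n^2\|\vec H\|^2-\|h\|^2$. Solving \eqref{15} for $\tau(T_xM^n)$ and equating with Lemma \ref{firstlemma} will isolate $\tilde\tau(T_xM^n)-\tilde\tau(T_xN_T)-\tilde\tau(T_xN_\perp)-\frac{n_2\Delta f}{f}$ on one side and a purely extrinsic expression (the $h^r_{ij}$ partial sums together with $n^2\|\vec H\|^2-\|h\|^2$) on the other.

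The heart of the argument is then to show that the resulting extrinsic expression is bounded above by $\frac12\|h\|^2$. Concretely, after substitution one expects the difference $\tilde\tau(T_xM^n)-\tilde\tau(T_xN_T)-\tilde\tau(T_xN_\perp)-\frac{n_2\Delta f}{f}$ to equal $\frac12\|h\|^2-\frac12 n^2\|\vec H\|^2$ plus cross terms coming from the mixed block (indices $a$ with $A$) of $h$ that are not captured by the two within-factor partial sums in Lemma \ref{firstlemma}. Here is where the $\mathcal D_T$-minimality established in Lemma \ref{1B} is essential: since the partial mean curvature vector $\vec H_1$ over the $N_T$-directions vanishes, and $h(\xi,\xi)=0$ by Lemma \ref{1551}, the term $n^2\|\vec H\|^2$ reduces to contributions only from the $N_\perp$ block, and several of the unwanted cross terms in the expansion of $n^2\|\vec H\|^2$ drop out. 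I would carefully expand $n^2\|\vec H\|^2=\sum_r\bigl(\sum_i h^r_{ii}\bigr)^2$ over the full frame and use the vanishing of the $\mathcal D_T$-trace to discard the diagonal self-squares along $N_T$, leaving a manifestly nonnegative remainder that one simply discards to obtain the inequality $\ge$.

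For part (ii), the equality analysis amounts to tracking which nonnegative quantities were thrown away to produce $\ge$. Equality forces (a) all the squared mixed terms $(h^r_{aA})^2$ to vanish, i.e. mixed-totally-geodesic behaviour, (b) the leftover contribution of $n^2\|\vec H\|^2$ to vanish, which under $\mathcal D_T$-minimality forces the $N_\perp$-partial mean curvature $\vec H_2$ to vanish as well, so that combined with $\vec H_1=0$ one gets $\vec H=0$ and hence $M^n$ minimal in $\tilde M^m$, and (c) the partial traces along $N_T$ to behave as a totally geodesic immersion. I would then invoke Lemma \ref{two}: since for a warped product $N_\perp$ is automatically totally umbilical in $M^n$, and $\mathcal D_2$-minimality (i.e. $\vec H_2=0$) together with umbilicity upgrades to $\mathcal D_2$-totally geodesic, one deduces the totally umbilical statement for $N_\perp$ in $\tilde M^m$ and the totally geodesic statement for $N_T$ in $\tilde M^m$. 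The converse direction is routine: assuming $N_T$ totally geodesic, $N_\perp$ totally umbilical and $M^n$ minimal, one substitutes back and checks each discarded term is zero.

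I expect the main obstacle to be the bookkeeping in part (ii), specifically disentangling precisely which vanishing conditions on the three index blocks (the $N_T$--$N_T$ diagonal block, the $N_\perp$--$N_\perp$ block, and the mixed block) correspond to \emph{totally geodesic $N_T$}, \emph{totally umbilical $N_\perp$}, and \emph{minimal $M^n$} respectively, and verifying that these three conditions are simultaneously necessary and sufficient rather than merely sufficient. The subtlety is that the partial sums in Lemma \ref{firstlemma} only see the within-factor blocks, so one must argue that $\mathcal D_T$-minimality plus the equality condition genuinely propagates to the off-diagonal $h^r_{ab}$ and $h^r_{AB}$ entries and not merely their traces; the converse is straightforward, so the delicate direction is showing necessity. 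The computation establishing the inequality itself in part (i) should be a direct, if somewhat lengthy, algebraic manipulation once \eqref{15} and Lemma \ref{firstlemma} are in hand.
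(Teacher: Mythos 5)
Your plan for part (i) is essentially the paper's own proof: the paper also combines \eqref{15} with Lemma \ref{firstlemma}, completes the square separately on the two within-factor diagonal blocks, kills $\sum_{r}\bigl(\sum_{a}h^r_{aa}\bigr)^2$ by $\mathcal{D}_T$-minimality (Lemma \ref{1B}), and notes that under $\mathcal{D}_T$-minimality $n^2\|\vec H\|^2=\sum_r\bigl(\sum_A h^r_{AA}\bigr)^2$, so these two terms cancel exactly. After the cancellations one is left with the identity
\begin{equation*}
\|h\|^2=2\Bigl(\tilde\tau(T_xM^n)-\tilde\tau(T_xN_T)-\tilde\tau(T_xN_\perp)-\tfrac{n_2\Delta f}{f}\Bigr)+\sum_{r=n+1}^{m}\sum_{a,b=1}^{n_1}(h^r_{ab})^2+\sum_{r=n+1}^{m}\sum_{A,B=n_1+1}^{n}(h^r_{AB})^2,
\end{equation*}
equivalently the right-hand side of (i) equals $\sum_r\sum_{a,A}(h^r_{aA})^2$. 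Note what this says: the only quantities ever discarded are the two \emph{within-block} sums of squares; the mixed block is never thrown away --- it is exactly what survives on the right-hand side. Your remark that the mixed block produces ``cross terms not captured by the partial sums'' that must be controlled is therefore slightly off, but harmlessly so for (i).

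The genuine gap is in your equality analysis for (ii), and it is not mere bookkeeping: you have the blocks exactly backwards. Equality in (i) does \emph{not} force the mixed terms $(h^r_{aA})^2$ to vanish; demanding mixed-totally-geodesic behaviour would be fatal, since in Chen's special cases (Theorem \ref{792}) the submanifolds attaining equality carry all of $h$ on the mixed block (equality there reads $\|h\|^2=2n_2\|\nabla\ln f\|^2$, which is generically nonzero). Equality is precisely (a) $h(\mathcal{D}_T,\mathcal{D}_T)=0$ and (b) $h(\mathcal{D}_\perp,\mathcal{D}_\perp)=0$. Likewise there is no ``leftover contribution of $n^2\|\vec H\|^2$'' whose vanishing forces $\vec H_2=0$: that term cancels identically against the $N_\perp$-trace square, and minimality at equality follows instead because (a) and (b) annihilate the entire diagonal of $h$, hence its trace. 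Finally, your derivation of the umbilicity of $N_\perp$ in $\tilde M^m$ via Lemma \ref{two} is circular: the hypothesis of Lemma \ref{two} is that $N_2$ is totally umbilical \emph{in the ambient manifold} $\tilde M^m$, which is the very conclusion you are after; umbilicity of the fiber in $M^n$ (automatic for any warped product) does not suffice. The correct division of labor, as in the paper, is the reverse of yours: the forward direction is the easy one --- (b) together with $\check h(Z,W)=-\bigl(g(Z,W)/f\bigr)\nabla f$ gives $\hat h(Z,W)=-g(Z,W)\nabla(\ln f)$, i.e.\ $N_\perp$ totally umbilical in $\tilde M^m$ --- whereas Lemma \ref{two} is needed precisely in the converse (sufficiency) direction: minimality of $M^n$ plus $\mathcal{D}_T$-minimality give $\mathcal{D}_\perp$-minimality, and then umbilicity of $N_\perp$ in $\tilde M^m$ upgrades this to (b). So the direction you dismissed as routine is exactly the one requiring the key lemma.
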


\begin{proof} 
Via (\ref{15}), we first have 
$$||h||^2= -2 \tau (T_xM^n)+2 \tilde \tau (T_xM^n)+ n^2||\vec H||^2.$$

In view of Lemma \ref{firstlemma}, the above equation takes the following form
$$||h||^2= 2 \tilde \tau (T_xM^n) - 2\tilde \tau (T_x N_T)-2 \tilde\tau (T_x N_\perp)-2 \frac {n_2 \Delta f}{f}+ n^2||\vec H||^2$$$$-2\left(\sum_{r=n+1}^{m}\sum_{1\le a<b\le n_1} \bigl(h_{aa}^r h_{bb}^r- (h_{ab}^r)^2\bigr)\right)$$$$~~~~~~~-2\left(\sum_{r=n+1}^{m}\sum_{n_1+1\le A<B\le n} \bigl(h_{AA}^r h_{BB}^r- (h_{AB}^r)^2\bigr)\right).$$
This is equivalent to 
\begin{equation}\label{1072}
||h||^2= 2 \tilde \tau (T_xM^n) - 2\tilde \tau (T_x N_T)-2 \tilde\tau (T_x N_\perp)-2 \frac {n_2 \Delta f}{f}+ n^2||\vec H||^2$$$$-\left(\sum_{r=n+1}^{m}\sum_{1\le a\ne b\le n_1} \bigl(h_{aa}^r h_{bb}^r- (h_{ab}^r)^2\bigr)\right)$$$$~~~~~~~-\left(\sum_{r=n+1}^{m}\sum_{n_1+1\le A\ne B\le n} \bigl(h_{AA}^r h_{BB}^r- (h_{AB}^r)^2\bigr)\right).
\end{equation}

\noindent
Since $\varphi$ is $\mathcal{D}_T$-minimal immersion, then
 $$-\left(\sum_{r=n+1}^{m}\sum_{1\le a\ne b\le n_1} \bigl(h_{aa}^r h_{bb}^r- (h_{ab}^r)^2\bigr)\right)=$$$$\sum_{r=n+1}^{m}\sum_{1\le a\ne b\le n_1} (h_{ab}^r)^2-\sum_{r=n+1}^{m}\sum_{1\le a\ne b\le n_1} h_{aa}^r h_{bb}^r=$$$$\overbrace{\sum_{r=n+1}^{m}\sum_{1\le a\ne b\le n_1} (h_{ab}^r)^2+\left(\sum_{r=n+1}^{m} \bigl((h_{11}^r)^2 + \cdots + (h_{n_1n_1}^r)^2\bigr)\right)}$$$$\overbrace{- \left(\sum_{r=n+1}^{m} \bigl((h_{11}^r)^2 + \cdots + (h_{n_1n_1}^r)^2\bigr)\right)-\sum_{r=n+1}^{m}\sum_{1\le a\ne b\le n_1} h_{aa}^r h_{bb}^r}.$$

By means of the binomial theorem, we deduce that 
$$\overbrace{\sum_{r=n+1}^{m}\sum_{1\le a\ne b\le n_1} (h_{ab}^r)^2+\left(\sum_{r=n+1}^{m} \bigl((h_{11}^r)^2 + \cdots + (h_{n_1n_1}^r)^2\bigr)\right)}=\sum_{r=n+1}^{m}\sum_{a, b=1}^{n_1}  (h_{ab}^r)^2,$$
and
$$\overbrace{- \left(\sum_{r=n+1}^{m} \bigl((h_{11}^r)^2 + \cdots + (h_{n_1n_1}^r)^2\bigr)\right)-\sum_{r=n+1}^{m}\sum_{1\le a\ne b\le n_1} h_{aa}^r h_{bb}^r}=$$$$- \sum_{r=n+1}^{m} (h_{11}^r+ \cdots  +h_{n_1n_1}^r)^2.$$

Next, by combining the last three equations together we obtain 
\begin{equation}\label{1070}
-\left(\sum_{r=n+1}^{m}\sum_{1\le a\ne b\le n_1} \bigl(h_{aa}^r h_{bb}^r- (h_{ab}^r)^2\bigr)\right)=\sum_{r=n+1}^{m}\sum_{a, b=1}^{n_1}  (h_{ab}^r)^2- \sum_{r=n+1}^{m} (h_{11}^r+ \cdots  +h_{n_1n_1}^r)^2.
\end{equation}

By Definition \ref{9}, the second term in the right hand side vanishes whenever $\varphi$ is $\mathcal{D}_T$-minimal, consequently (\ref{1070}) reduces to
\begin{equation}\label{1071}
-\left(\sum_{r=n+1}^{m}\sum_{1\le a\ne b\le n_1} \bigl(h_{aa}^r h_{bb}^r- (h_{ab}^r)^2\bigr)\right)=\sum_{r=n+1}^{m}\sum_{a, b=1}^{n_1}  (h_{ab}^r)^2.
\end{equation}
Combining (\ref{1071}) and (\ref{1072}), it yields to 
$$||h||^2= 2 \tilde \tau (T_xM^n) - 2\tilde \tau (T_x N_T)-2 \tilde\tau (T_x N_\perp)-2 \frac {n_2 \Delta f}{f}+ n^2||\vec H||^2$$$$+\sum_{r=n+1}^{m}\sum_{a, b=1}^{n_1}  (h_{ab}^r)^2$$$$-\left(\sum_{r=n+1}^{m}\sum_{n_1+1\le A\ne B\le n} \bigl(h_{AA}^r h_{BB}^r- (h_{AB}^r)^2\bigr)\right).$$

Equivalently, 
$$||h||^2\ge 2 \tilde \tau (T_xM^n) - 2\tilde \tau (T_x N_T)-2 \tilde\tau (T_x N_\perp)-2 \frac {n_2 \Delta f}{f}+ n^2||\vec H||^2$$$$-\left(\sum_{r=n+1}^{m}\sum_{n_1+1\le A\ne B\le n} \bigl(h_{AA}^r h_{BB}^r- (h_{AB}^r)^2\bigr)\right).$$

Again, by adding and subtracting similar term technique, the above inequality becomes
$$||h||^2\ge 2 \tilde \tau (T_xM^n) - 2\tilde \tau (T_x N_T)-2 \tilde\tau (T_x N_\perp)-2 \frac {n_2 \Delta f}{f}+ n^2||\vec H||^2$$$$
-\sum_{r=n+1}^{m}\left( (h_{n_1+1n_1+1}^r)^2 + \cdots + (h_{nn}^r)^2+\sum_{n_1+1\le A\ne B\le n} h_{AA}^r h_{BB}^r\right)$$$$
+\sum_{r=n+1}^{m}\left( (h_{n_1+1n_1+1}^r)^2 + \cdots + (h_{nn}^r)^2+\sum_{n_1+1\le A\ne B\le n} (h_{AB}^r)^2\right).$$

Applying the binomial theorem on the last two terms of the above equation, we derive that 
$$||h||^2\ge 2 \tilde \tau (T_xM^n) - 2\tilde \tau (T_x N_T)-2 \tilde\tau (T_x N_\perp)-2 \frac {n_2 \Delta f}{f}+ n^2||\vec H||^2$$$$
-\sum_{r=n+1}^{m} (h_{n_1+1n_1+1}^r+ \cdots  +h_{nn}^r)^2$$$$
+\sum_{r=n+1}^{m}\sum_{A, B=n_1+1}^{n} (h_{AB}^r)^2.$$

Consequently,
$$||h||^2\ge 2 \tilde \tau (T_xM^n) - 2\tilde \tau (T_x N_T)-2 \tilde\tau (T_x N_\perp)-2 \frac {n_2 \Delta f}{f}+ n^2||\vec H||^2$$$$
-\sum_{r=n+1}^{m} (h_{n_1+1n_1+1}^r+ \cdots  +h_{nn}^r)^2.$$

We know that the last term in the right hand side of the above inequality is equal to $-n^2||\vec H||^2$ for $\mathcal{D}_T$-minimal warped product $CR$-submanifolds. By this fact, the inequality of statement $(i)$ follows immediately from the above inequality. 

Now, the equality sign of the inequality in $(i)$ holds if and only if 
$$(a)~h(\mathcal{D}_T, \mathcal{D}_T)=0,~~~~~~~~~~~~(b)~h(\mathcal{D}_\perp, \mathcal{D}_\perp)=0.$$
Hence, we need to show that $(a)$ and $(b)$ hold if and only if $N_T$, $N_\perp$ and $M^n$ are respectively totally geodesic, totally umbilical and minimal submanifolds in $\tilde M^m$.

First, assume that $(a)$ and $(b)$ are satisfied. Since $M^n=N_T\times _fN_\perp$ is a warped product, then $N_T$ and $N_\perp$ are totally geodesic and totally umbilical in $M^n$, respectively. Therefore, part $(a)$ above implies that the first factor is a totally geodesic submanifold in $\tilde M^m$. The second factor is totally umbilical in $\tilde M^m$ because of part $(b)$. Moreover, (b) and (a) together imply that $M^n$ is minimal in $\tilde M^m$.

 For the converse, $(a)$ is clear. To obtain $(b)$, we first notice that minimality and $\mathcal{D}_T$-minimality of $M^n$ in $\tilde M^m$ yield to $\mathcal{D}_\perp$-minimality of $M^n$ in $\tilde M^m$. Hence, Lemma \ref{two} proves $(b)$. This gives the assertion. 
\end{proof}

\section{Special Inequalities and Applications}

As a first application, we embark on by deriving the three theorems of [\cite{2211gg}] from Theorem \ref{299} to be particular case theorems. For this, consider the warped product $CR$-submanifolds of type $N_T\times _fN_\perp$ in complex space forms. Since the ambient manifold $\tilde M^m$ of Theorem \ref{299} is an arbitrary Kahler manifold, we can consider $\tilde M^m$ to be a complex space form $\tilde M^{2m}(c_{Ka})$. Hence, for every $CR$-warped product $M^n=N_T\times _fN_\perp$ in $\tilde M^{2m}(c_{Ka})$, we just use the curvature tensor of complex space forms (\cite{2211gg}) to compute the following
$$2\biggl(\tilde \tau (T_xM^n)-\tilde \tau (T_xN_1)-\tilde \tau (T_xN_2)\biggr)=\frac{c_{Ka}}{4}\biggl(n(n-1)+3n_1-n_1(n_1-1)-3n_1-n_2(n_2-1)\biggr)$$$$=\frac{c_{Ka}n_1n_2}{2}.~~~~~~$$
Substituting the above expression in Theorem \ref{299}, because $CR$-warped product submanifolds of Kaehler manifolds are $\mathfrak{D_1}$-minimal, we obtain the following theorem as special case. 
\begin{theorem}\label{dp.}
Let $\varphi :M^n=N_T\times _fN_\perp \longrightarrow \tilde M^m$ be an isometric immersion of a warped product $CR$-submanifold $M^n$ into a complex space form $\tilde {M}^m$. Then, we have
 \begin{enumerate}
\item[(i)]$||h||^2\ge 2n_2\biggl(||\nabla (\ln f)||^2-\Delta (\ln f)+\frac{c_S+3}{2}s+1\biggr).$
\item[(ii)] The equality in (i) holds identically if and only if $N_T$, $N_\perp$ and $M^n$ are totally geodesic, totally umbilical and minimal submanifolds in $\tilde M^m$, respectively.
\end{enumerate}
\end{theorem}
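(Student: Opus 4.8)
The plan is to read the claimed inequality as the specialization of the general inequality of Theorem \ref{299} to a space form as ambient, so that only two explicit computations remain: evaluating the intrinsic scalar-curvature defect and rewriting the warping-function term. I would start from the conclusion of Theorem \ref{299},
\[
\tfrac12\|h\|^2\ \ge\ \tilde\tau(T_xM^n)-\tilde\tau(T_xN_T)-\tilde\tau(T_xN_\perp)-\frac{n_2\Delta f}{f},
\]
which is available because the $CR$-warped products here are $\mathcal{D}_1$-minimal by Lemma \ref{1B}, and the proof of Theorem \ref{299} used only this minimality together with the general Gauss identity (\ref{15}) and the frame bookkeeping of Lemma \ref{firstlemma}; none of that step invoked the Kähler hypothesis beyond $\mathcal{D}_T$-minimality, so it transfers to the Sasakian space form whose curvature tensor is (\ref{158}) (the appearance of $c_S$ and the constant $+1$ signal that this, rather than a genuinely complex, space form is intended).

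The core step is to evaluate $\tilde\tau(T_xM^n)-\tilde\tau(T_xN_T)-\tilde\tau(T_xN_\perp)$, which by definition is exactly the sum of ambient sectional curvatures over the mixed $2$-planes $e_a\wedge e^\star_A$ with $e_a$ tangent to $N_T$ (that is, to $\mathcal{D}_T\oplus\langle\xi\rangle$) and $e^\star_A$ tangent to $N_\perp$. I would insert (\ref{158}) into each $\tilde K(e_a\wedge e^\star_A)$ using the adapted frame of Lemma \ref{1B}. Two structural facts make the sum collapse: because $\mathcal{D}_T$ is invariant and $\mathcal{D}_\perp$ anti-invariant, every contraction $\tilde g(\phi e_a,e^\star_A)$ vanishes, killing all $\phi$-section terms on mixed planes; and the $\eta$-block of (\ref{158}) contributes only through $\xi$. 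A short contraction then gives $\tilde K(e_a\wedge e^\star_A)=\frac{c_S+3}{4}$ for the $2s$ holomorphic directions $e_a\in\mathcal{D}_T$ and $\tilde K(\xi\wedge e^\star_A)=1$ for the characteristic direction. Summing over the $n_2$ directions of $N_\perp$ yields
\[
\tilde\tau(T_xM^n)-\tilde\tau(T_xN_T)-\tilde\tau(T_xN_\perp)=n_2\Bigl(\tfrac{c_S+3}{2}\,s+1\Bigr).
\]
This bookkeeping — tracking which $\phi$- and $\eta$-contractions survive in the adapted frame and confirming that the $\xi$-sectional curvature equals $1$ — is the step I expect to be the main obstacle, since it is where the precise constant is pinned down.

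It then remains to convert the warping term: from the Laplacian convention (\ref{M1}) and the identity (\ref{consequence}) one computes $\Delta(\ln f)=\frac{\Delta f}{f}+\|\nabla\ln f\|^2$, whence $-\frac{n_2\Delta f}{f}=n_2\bigl(\|\nabla\ln f\|^2-\Delta\ln f\bigr)$. Substituting both evaluations into the base inequality and multiplying by $2$ produces statement (i) in exactly the displayed form. Finally, statement (ii) needs no fresh argument: Theorem \ref{299} already characterized its own equality case by $h(\mathcal{D}_T,\mathcal{D}_T)=0$ and $h(\mathcal{D}_\perp,\mathcal{D}_\perp)=0$, equivalently by $N_T$, $N_\perp$, $M^n$ being totally geodesic, totally umbilical, and minimal; since passing to a space form only replaces the ambient scalar curvatures by their closed-form values and leaves the discarded nonnegative term of the Gauss estimate intact, that characterization carries over verbatim, with Lemma \ref{two} again handling the totally-umbilical-to-totally-geodesic reduction.
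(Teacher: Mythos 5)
Your proposal is correct, and it is in fact more faithful to the printed statement than the paper's own derivation, from which it genuinely diverges at the decisive step. Both arguments share the same skeleton: specialize Theorem \ref{299} (whose proof, as you observe, uses only the Gauss identity \eqref{15}, the frame bookkeeping of Lemma \ref{firstlemma} and $\mathcal{D}_1$-minimality, so it transfers to the contact setting via Lemma \ref{1B}), rewrite $-n_2\Delta f/f$ as $n_2\bigl(||\nabla \ln f||^2-\Delta \ln f\bigr)$ using the sign convention \eqref{M1} (your identity $\Delta(\ln f)=\frac{\Delta f}{f}+||\nabla \ln f||^2$ is the right one for that convention), and inherit the equality case verbatim from Theorem \ref{299} and Lemma \ref{two}. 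The divergence is in evaluating the curvature defect $\tilde\tau(T_xM^n)-\tilde\tau(T_xN_T)-\tilde\tau(T_xN_\perp)$: the paper literally takes the ambient to be a complex space form $\tilde M^{2m}(c_{Ka})$ and computes, by a dimension count with the K\"ahler curvature tensor, that twice this defect equals $\frac{c_{Ka}n_1n_2}{2}$, which when substituted into Theorem \ref{299} yields $||h||^2\ge 2n_2\bigl(||\nabla \ln f||^2-\Delta \ln f\bigr)+\frac{c_{Ka}n_1n_2}{2}$ --- the analogue of Theorem \ref{792}, but \emph{not} the displayed bound of Theorem \ref{dp.}, since no identification of $c_{Ka}$ with $c_S$ turns $\frac{c_{Ka}n_1n_2}{2}$ into $2n_2\bigl(\frac{c_S+3}{2}s+1\bigr)$. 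You instead read the $c_S$ and the additive $+1$ as signalling a Sasakian space form, and evaluate the mixed planes with \eqref{158} in the adapted frame: the $2s$ planes $e_a\wedge e^\star_A$ each contribute $\frac{c_S+3}{4}$ because $\eta(e_a)=\eta(e^\star_A)=0$ and $\tilde g(\phi e_a,e^\star_A)=0$, while $\xi\wedge e^\star_A$ contributes $\frac{c_S+3}{4}-\frac{c_S-1}{4}=1$, giving the defect $n_2\bigl(\frac{c_S+3}{2}s+1\bigr)$ and hence statement (i) exactly. What each approach buys: the paper's computation is a correct specialization but proves a different inequality from the one it announces (the statement--proof mismatch is an inconsistency of the paper, not of your argument), whereas your plane-by-plane contraction is the only reading under which the constant, the appearance of $s$, and the equality characterization are simultaneously coherent, and it makes explicit why the K\"ahler hypothesis in Theorem \ref{299} can be relaxed to $\mathcal{D}_1$-minimality with $\mathcal{D}_1=\mathcal{D}_T\oplus\langle\xi\rangle$.
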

  
\begin{remark}
Inequalities of Theorems 4.1, 5.1 and 6.1 in [\cite{2211gg}] are special cases of Theorem \ref{299}, where the ambient manifold is a complex Euclidean, a complex projective and a complex hyperbolic space, respectively.
\end{remark}

As another application of Theorem \ref{299}, we have
\begin{corollary}
Let $M^n=N_T\times _fN_\perp$ be a warped product $CR$-submanifold in a Kahler manifold $\tilde M^{m}$ and suppose $N_T$ is compact. Denote by $dv_T$ and $vol(N_T)$ the volume element and the volume on $N_T$. Let $\lambda_T$ be the first non zero eigenvalue of the Laplacian on $N_T$. Then
$$\frac{1}{2}\int_{N_T} ||h||^2 dv_T\ge n_1\biggl(\tilde\tau(T_xM)-\tilde\tau(T_xN_T)-\tilde\tau(T_xN_\perp)\biggr) vol(N_T)+n_1\lambda_T \int_{N_T} (\ln f)^2 dv_T.$$
\end{corollary}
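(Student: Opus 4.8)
The plan is to integrate the pointwise estimate of Theorem \ref{299}(i) over the compact factor $N_T$ and then convert the resulting Dirichlet energy of $\ln f$ into the eigenvalue term by means of the variational characterization of $\lambda_T$.

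First I would rewrite the warping term pointwise. Since $f$ is a positive function on $N_T$, the definition \eqref{M1} of the Laplacian together with \eqref{consequence} yields the identity
$$\Delta(\ln f)=\frac{\Delta f}{f}+||\nabla\ln f||^2,$$
so that $-\,n_2\frac{\Delta f}{f}=n_2||\nabla\ln f||^2-n_2\,\Delta(\ln f)$. Feeding this into Theorem \ref{299}(i) gives the pointwise inequality
$$\frac12||h||^2\ge\tilde\tau(T_xM^n)-\tilde\tau(T_xN_T)-\tilde\tau(T_xN_\perp)+n_2||\nabla\ln f||^2-n_2\,\Delta(\ln f),$$
which is consistent with the special form recorded in Theorem \ref{792}.

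Next I would integrate this inequality over $N_T$ against $dv_T$. As $N_T$ is compact, the divergence theorem \eqref{22} annihilates the total Laplacian term, $\int_{N_T}\Delta(\ln f)\,dv_T=0$, so that
$$\frac12\int_{N_T}||h||^2\,dv_T\ge\int_{N_T}\Big(\tilde\tau(T_xM^n)-\tilde\tau(T_xN_T)-\tilde\tau(T_xN_\perp)\Big)dv_T+n_2\int_{N_T}||\nabla\ln f||^2\,dv_T.$$
When the curvature combination in the first integrand is constant along $N_T$ (for instance when $\tilde M^m$ is a complex space form) it factors out as that constant times $\mathrm{vol}(N_T)$. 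The decisive step is then to bound the Dirichlet energy below through the min--max characterization of the first nonzero eigenvalue $\lambda_T$ of the Laplacian on the compact manifold $N_T$: every function $\psi$ that is $L^2$-orthogonal to the constants satisfies $\int_{N_T}||\nabla\psi||^2\,dv_T\ge\lambda_T\int_{N_T}\psi^2\,dv_T$. Applying this to $\psi=\ln f$, after normalizing $f$ so that $\int_{N_T}\ln f\,dv_T=0$, gives $\int_{N_T}||\nabla\ln f||^2\,dv_T\ge\lambda_T\int_{N_T}(\ln f)^2\,dv_T$, and substituting it into the integrated inequality produces the asserted bound.

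I expect the two delicate points to be the following. The eigenvalue estimate is valid only for mean-zero functions, so one must justify (or impose) the normalization $\int_{N_T}\ln f\,dv_T=0$; without it the term $(\ln f)^2$ must be replaced by the variance $(\ln f-\overline{\ln f})^2$. Equally, the bookkeeping of the dimensional prefactors requires care, since the computation above delivers the coefficient $n_2=\dim N_\perp$ in front of the eigenvalue integral and the curvature contribution as a plain integral over $N_T$; reconciling these with the coefficient $n_1$ and the factored form $n_1(\cdots)\,\mathrm{vol}(N_T)$ appearing in the statement is the step I would verify most carefully.
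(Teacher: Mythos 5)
Your proposal reproduces the paper's own proof: the paper's two-line argument is exactly your two steps, namely integrating the inequality of Theorem \ref{299} over the compact factor $N_T$ (so that $\int_{N_T}\Delta(\ln f)\,dv_T=0$ by \eqref{22}) and then invoking the minimum principle $\int_{N_T}||\nabla \ln f||^2\,dv_T\ge \lambda_T\int_{N_T}(\ln f)^2\,dv_T$. The two delicate points you flag at the end are genuine, and they are defects of the corollary as stated rather than of your argument: carried out carefully, the integration yields the coefficient $n_2$ (not $n_1$) in front of the eigenvalue term and the coefficient $1$ (not $n_1$) on the curvature term, and the curvature combination $\tilde\tau(T_xM)-\tilde\tau(T_xN_T)-\tilde\tau(T_xN_\perp)$ can only be pulled out as a constant multiple of $vol(N_T)$ when it is constant along $N_T$ (e.g.\ in a complex space form); likewise the minimum principle applies only to functions $L^2$-orthogonal to the constants, a normalization of $\ln f$ that the paper silently assumes. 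So your derivation is correct and coincides with the paper's route, and the reconciliation you were unsure about cannot in fact be made --- the $n_1$ prefactors in the printed statement do not follow from Theorem \ref{299} and should be read as $n_2$ and $1$ respectively.
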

\begin{proof}
From the minimum principle we have 
$$\int_{N_T} ||\nabla \ln f||^2 dv_T\ge \lambda_T\int_{N_T} (\ln f)^2 dv_T.$$
Now we have to integrate on $N_T$ the inequality of Theorem \ref{299} which is satisfied by the norm of $h$, and then we obtain immediately the result.
\end{proof}

Above integration over $N_T$ can be generalized to integration of a general measurable manifold with invariance properties. For this we will state the following:

\begin{theorem}
Let $M_{\mu}$ be a measurable manifold with a measure $\mu$ defined on it. Moreover, let $g:\mu \rightarrow \mu'$ be an invariance transformation from measure $\mu$ to measure $\mu'$. Then, we can express the integral $\int_{M_{\mu}} X$ over a quantity $X$ as the limit 
$\int_{M_{\mu}} = lim_{g \mapsto id} \sum_{x \in M_{\mu}} \mu(x) X(gx)$
where $x$ is an element of the manifold, here, the covering basis of it and $id$ is the identity operator. 
\end{theorem}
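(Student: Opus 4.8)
The plan is to realize the stated identity as a measure-theoretic Riemann-sum approximation, built on three ingredients: a measurable exhaustion of $M_\mu$ by a family of small cells indexed by the covering basis, the change-of-variables behavior of $\mu$ under $g$, and a passage to the limit $g\mapsto id$ accompanied by refinement of the cells. First I would fix, for each value of the parameter controlling $g$, a countable measurable partition $\{U_x\}_{x}$ of $M_\mu$ subordinate to the covering basis, with $x\in U_x$ a chosen representative point and $\mu(x)$ read as the mass $\mu(U_x)$ of the cell containing $x$. For a simple (piecewise-constant) quantity $X$ this already gives $\int_{M_\mu} X\,d\mu=\sum_{x}\mu(U_x)\,X(x)$ by the very definition of the integral of a simple function, so the identity is exact before any limit is taken; a general integrable $X$ is then handled by monotone approximation from below by such simple quantities.

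Second, I would bring in the transformation $g$ through the change-of-variables formula: since $g$ pushes $\mu$ forward to $\mu'$, one has $\int_{M_\mu}X(gx)\,d\mu(x)=\int_{M_\mu}X\,d\mu'$, and the invariance hypothesis is precisely the assertion that $\mu'\to\mu$ as $g\mapsto id$. Evaluating the right-hand side by the simple-function formula of the previous step produces $\sum_{x}\mu(U_x)\,X(gx)$, which is exactly the summand appearing in the claimed limit.

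Third, the limit itself: as $g\mapsto id$ and simultaneously the mesh of $\{U_x\}$ shrinks to zero, the composite $X\circ g$ converges to $X$ pointwise and the discrete masses $\mu(U_x)$ reproduce $d\mu$, so the Riemann-type sums converge to $\int_{M_\mu}X\,d\mu$. To justify interchanging the limit with the summation I would assume $X$ bounded and $\mu$-integrable (or impose an integrable dominating function) and invoke dominated convergence, together with uniform continuity of $X$ on each compact cell so that $X(gx)\to X(x)$ uniformly there as $g\mapsto id$.

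The hard part will be the first step rather than the limit: one must give precise meaning to the \emph{covering basis}, to the evaluation $\mu(x)$ of a measure \emph{at a point}, and to an \emph{invariance transformation} $g:\mu\to\mu'$, since literally a nonatomic measure assigns zero mass to any single point. The cleanest fix is to interpret $\mu(x)$ as the mass of the basis cell containing $x$ and to require that the family $\{g\}$ act measure-preservingly in the limit; once these conventions are pinned down, the three steps assemble into the identity and the remaining estimates reduce to routine dominated-convergence bookkeeping.
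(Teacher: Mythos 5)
Your proposal is essentially sound, but it takes a genuinely different and far more structured route than the paper. The paper's own proof is a two-sentence heuristic: it observes that $X(gx)-X(x)$ becomes infinitesimal as $g\mapsto id$ when $X$ is smooth, handles non-smooth $X$ by noting that a suitable choice of $g$ shifts the sampling points (the ``covering'' $x$) away from singularities, and then simply appeals to measurability of $M_\mu$ to declare the measure-weighted sum well defined. You instead build the statement up from standard measure theory: a countable measurable partition subordinate to the covering basis with $\mu(x)$ reinterpreted as the cell mass $\mu(U_x)$, exactness of the identity for simple functions, the pushforward change-of-variables formula $\int X(gx)\,d\mu=\int X\,d\mu'$ to bring in the invariance hypothesis $\mu'\to\mu$, and dominated convergence plus mesh refinement for the limit. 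What your approach buys is actual rigor under explicit hypotheses (bounded integrable $X$, measure-preserving limit of the family $\{g\}$), and you correctly identify the real difficulty the paper glosses over entirely --- that $\mu(x)$ at a point is literally zero for nonatomic $\mu$ and that ``invariance transformation'' and ``covering basis'' need definitions before the limit can even be stated. What the paper's sketch retains that you treat only implicitly is the regularizing role of $g$ for non-smooth $X$: there $g$ is chosen to move sample points off the singular set, whereas your dominated-convergence argument covers this only when the discontinuities form a $\mu$-null set (together with your uniform-continuity assumption on compact cells, which would otherwise fail at singularities). Given how underspecified the theorem is, your reconstruction is the more defensible of the two, and your flagged conventions are doing the work the paper's proof never attempts.
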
 
\begin{proof}
Consider two values of a quantity $X$, namely $X(gx)$ and $X(x)$ for any manifold covering $x$. The transformation $g$ will now tend to the identity transform. Thus, $X(gx)-X(x)$ will be infinitesimal in the case when the function is smooth.  
In non-smooth case, the transformation $g$ will shift the covering $x$ from the singularity apart by appropriate choice of it. Since the manifold $M_{\mu}$ is measurable, we can define a measure on it and can also compute a measure-weighted sum over $X$.
\end{proof}

\section{An Extension of the Inequality to Warped Product $CR$-Submanifolds in Nearly Kahler Manifolds}

\begin{theorem}\label{}
Let $\varphi :M^n=N_T\times _fN_\perp \longrightarrow \tilde M^m$ be an isometric immersion of a warped product $CR$-submanifold $M^n$ into a nearly Kahler manifold $\tilde {M}^m$. Then, we have
 \begin{enumerate}
\item[(i)]$||h||^2\ge 2n_2\biggl( \frac{c_{Ke}-3}{2}s-\Delta (\ln f)\biggr).$
\item[(ii)] The equality in (i) holds identically if and only if $N_T$, $N_\perp$ and $M^n$ are totally geodesic, totally umbilical and minimal submanifolds in $\tilde M^m$, respectively.
\end{enumerate}
\end{theorem}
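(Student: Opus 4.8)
The plan is to obtain this result as a specialization of the general inequality in Theorem \ref{299}, in exactly the same way Theorem \ref{dp.} was extracted in the complex space form setting. The first observation is that the proof of Theorem \ref{299} never invoked the Kahler condition directly: it rested only on the Gauss equation \eqref{15}, the scalar-curvature decomposition of Lemma \ref{firstlemma}, and the $\mathcal{D}_T$-minimality of $\varphi$. Since Remark \ref{104} guarantees that warped product $CR$-submanifolds $N_T\times_f N_\perp$ are $\mathcal{D}_T$-minimal in nearly Kahler ambients as well, the entire chain of manipulations transfers verbatim, so that
$$\frac{1}{2}\|h\|^2\ge \tilde\tau(T_xM^n)-\tilde\tau(T_xN_T)-\tilde\tau(T_xN_\perp)-\frac{n_2\Delta f}{f}$$
continues to hold here. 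This reduces the theorem to a purely ambient computation plus the equality analysis.

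Next I would specialize the ambient to the relevant space form and evaluate the quantity $\tilde\tau(T_xM^n)-\tilde\tau(T_xN_T)-\tilde\tau(T_xN_\perp)$. Using the adapted orthonormal frame introduced in Lemma \ref{1B}, in which $N_T$ carries the $\phi$-invariant frame $\{e_1,\dots,e_s,\phi e_1,\dots,\phi e_s\}$ and $N_\perp$ carries $\{e_1^\star,\dots,e_q^\star\}$, I would insert the space-form curvature tensor into \eqref{16} and retain only the cross terms linking $\mathcal{D}_T$ and $\mathcal{D}_\perp$, since the intra-$N_T$ and intra-$N_\perp$ sectional curvatures are exactly cancelled by the subtracted quantities $\tilde\tau(T_xN_T)$ and $\tilde\tau(T_xN_\perp)$. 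Summing the surviving $\phi$-section contributions produces the constant multiple of $s$ that appears in statement (i).

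Substituting this value into the general inequality, and converting $\tfrac{\Delta f}{f}$ by means of the identity relating it to $\Delta(\ln f)$ and $\|\nabla\ln f\|^2$, then yields the explicit bound of part (i). For part (ii) the equality discussion is identical to that of Theorem \ref{299}: equality forces $h(\mathcal{D}_T,\mathcal{D}_T)=0$ together with $h(\mathcal{D}_\perp,\mathcal{D}_\perp)=0$, and since for a warped product the leaf $N_T$ is automatically totally geodesic and the fiber $N_\perp$ totally umbilical inside $M^n$, these two vanishing conditions are equivalent to $N_T$ being totally geodesic, $N_\perp$ totally umbilical and $M^n$ minimal in $\tilde M^m$, with Lemma \ref{two} supplying the one nontrivial implication.

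The step I expect to be the main obstacle is the curvature bookkeeping in the second paragraph: correctly isolating the mixed $\mathcal{D}_T$--$\mathcal{D}_\perp$ sectional curvatures, treating the characteristic direction $\xi$ consistently within the frame, and checking that the $\phi$-dependent terms of the space-form tensor recombine into the precise coefficient of $s$ stated in (i). By contrast, the transfer of the general inequality from Theorem \ref{299} and the equality analysis are routine once $\mathcal{D}_T$-minimality has been secured through Remark \ref{104}.
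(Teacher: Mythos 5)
Your overall skeleton --- transfer Theorem \ref{299} to the new ambient via the $\mathcal{D}_T$-minimality of Remark \ref{104}, specialize the ambient curvature in an adapted frame, and recycle the equality analysis through Lemma \ref{two} --- is exactly the route the paper intends; indeed the paper prints no proof of this theorem at all, leaving precisely this derivation implicit. The trouble is that your second and third steps would fail as described. First, your specialization has nothing to specialize to: the hypothesis is an \emph{arbitrary} nearly Kahler manifold, while the bound in (i) involves $c_{Ke}$, which is the constant $\phi$-sectional curvature of a \emph{Kenmotsu space form}, defined by \eqref{156} on an odd-dimensional almost contact ambient, not an almost Hermitian one. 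No nearly Kahler manifold carries a $c_{Ke}$, and your plan to ``insert the space-form curvature tensor'' never identifies which tensor; the candidates actually compatible with the hypothesis (complex space form, or the generalized complex space form of the paper's next theorem) yield coefficients of the form $\frac{c_{Ka}}{2}s$ or $n_1\frac{c_{RK}+3\gamma}{4}$, never $\frac{c_{Ke}-3}{2}s$. Your own aside about ``treating the characteristic direction $\xi$ consistently within the frame'' is the tell: there is no $\xi$ in a nearly Kahler ambient --- $\xi$ and $c_{Ke}$ belong to the contact statement from which this theorem has evidently been transplanted. A correct write-up must either change the hypothesis to a Kenmotsu space form (and then redo the curvature bookkeeping including the $\eta$-terms of \eqref{156} and the extra mixed contributions of the $\xi$-direction) or change the constant; as printed, the inequality is not derivable from Theorem \ref{299}, and you should have flagged this rather than deferred it as ``bookkeeping''.

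Second, even granting an appropriate space form, your substitution does not ``yield the explicit bound of part (i)''. With the paper's sign convention \eqref{M1} one has $\frac{\Delta f}{f}=\Delta(\ln f)-\|\nabla \ln f\|^2$, so Theorem \ref{299} gives $\|h\|^2\ge 2n_2\bigl(\|\nabla\ln f\|^2-\Delta(\ln f)\bigr)+(\text{curvature term})$; compare Theorem \ref{dp.}, where the $\|\nabla(\ln f)\|^2$ term duly survives. Statement (i) omits it, so to reach (i) you must discard the nonnegative quantity $2n_2\|\nabla\ln f\|^2$ --- and once a term has been thrown away, your claim that the equality discussion is ``identical to that of Theorem \ref{299}'' collapses: equality in (i) would additionally force $\nabla\ln f=0$, i.e.\ a trivial warped product, so the ``if and only if'' in (ii) cannot be established by the argument you outline. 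In short, you have correctly reconstructed the intended proof scheme (which is sound as far as reproducing the general inequality in the nearly Kahler setting goes), but the statement as printed is internally inconsistent, and a blind execution of your plan would necessarily produce a different inequality than (i) and a different equality characterization than (ii).
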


Following a similar analogue of the previous section, we can use the above theorem to obtain a special inequality of generalized complex space forms 
\begin{theorem}\label{}
Let $\varphi :M^n=N_T\times _fN_\perp \longrightarrow \tilde M^m$ be an isometric immersion of a warped product $CR$-submanifold $M^n$ into a generalized complex space form $\tilde {M}^m$. Then, we have
 \begin{enumerate}
\item[(i)]$||h||^2\ge 2n_2 \biggl(||\nabla (\ln f)||^2-\Delta (\ln f)+n_1\frac{c_{RK}+3\gamma}{4}\biggr).$
\item[(ii)] The equality in (i) holds identically if and only if $N_T$, $N_\perp$ and $M^n$ are totally geodesic, totally umbilical and minimal submanifolds in $\tilde M^m$, respectively.
\end{enumerate}
\end{theorem}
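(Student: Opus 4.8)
The plan is to specialize the general inequality of Theorem \ref{299} --- in its nearly Kähler form established in the previous section, which applies because every generalized complex space form is nearly Kähler of constant type and $CR$-warped products therein are $\mathcal{D}_T$-minimal by Remark \ref{104} --- to the case in which the ambient curvature tensor is that of a generalized complex space form. The only ambient-dependent quantity in Theorem \ref{299}(i) is the scalar-curvature difference $\tilde\tau(T_xM^n)-\tilde\tau(T_xN_T)-\tilde\tau(T_xN_\perp)$, so the entire task reduces to evaluating this difference in closed form and then rewriting the warping term.

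First I would observe that $\tilde\tau(T_xM^n)$ sums $\tilde K(e_i\wedge e_j)$ over all pairs, while $\tilde\tau(T_xN_T)$ and $\tilde\tau(T_xN_\perp)$ collect only the pairs internal to each factor; hence the difference is exactly the sum of the mixed sectional curvatures,
$$\tilde\tau(T_xM^n)-\tilde\tau(T_xN_T)-\tilde\tau(T_xN_\perp)=\sum_{a=1}^{n_1}\sum_{A=n_1+1}^{n}\tilde K(e_a\wedge e_A),$$
where $e_a$ runs over a frame of $\mathcal{D}_T=TN_T$ and $e_A$ over a frame of $\mathcal{D}_\perp=TN_\perp$. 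Inserting the curvature tensor of a generalized complex space form into $\tilde K(e_a\wedge e_A)=\tilde R(e_a,e_A,e_A,e_a)$, the constant-curvature part contributes $\tfrac{c_{RK}+3\gamma}{4}$ per pair, while the $J$-dependent (holomorphic) part involves only the inner products $g(e_a,Je_A)$, $g(e_A,Je_a)$, $g(e_a,Je_a)$ and $g(e_A,Je_A)$. These all vanish: $N_T$ is invariant so $Je_a\in TN_T\perp TN_\perp$; $N_\perp$ is anti-invariant so $Je_A\in T^\perp M^n$ is normal; and the two self-paired terms vanish by the skew-symmetry of the fundamental form $\Phi$. Thus each mixed sectional curvature equals $\tfrac{c_{RK}+3\gamma}{4}$, and summing over the $n_1 n_2$ pairs gives
$$\tilde\tau(T_xM^n)-\tilde\tau(T_xN_T)-\tilde\tau(T_xN_\perp)=n_1 n_2\,\frac{c_{RK}+3\gamma}{4}.$$

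Next I would substitute this into Theorem \ref{299}(i) and convert the warping term. Using $\nabla\ln f=\nabla f/f$ together with the Laplacian convention (\ref{M1}), one obtains the identity $\tfrac{\Delta f}{f}=\Delta(\ln f)-||\nabla\ln f||^2$, so that $-2n_2\tfrac{\Delta f}{f}=2n_2\,||\nabla\ln f||^2-2n_2\,\Delta(\ln f)$. Multiplying Theorem \ref{299}(i) by $2$ and collecting terms then produces exactly statement (i). For statement (ii) no new work is needed: the equality characterization is inherited verbatim from Theorem \ref{299}(ii), since it depends only on the vanishing of $h(\mathcal{D}_T,\mathcal{D}_T)$ and $h(\mathcal{D}_\perp,\mathcal{D}_\perp)$ and is independent of the ambient curvature tensor.

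I expect the main obstacle to be the bookkeeping in the curvature evaluation: one must confirm that every $J$-term of the generalized-complex-space-form curvature tensor genuinely drops out for each mixed pair (this is precisely where invariance of $N_T$, anti-invariance of $N_\perp$, and the skew-symmetry of $\Phi$ do the work), and one must fix the correct normalization so that the constant part is indeed $\tfrac{c_{RK}+3\gamma}{4}$ rather than an off-by-constant variant. A secondary point of care is the sign in the convention (\ref{M1}), which must be tracked so that the combination $||\nabla\ln f||^2-\Delta(\ln f)$ emerges with exactly the signs displayed in (i).
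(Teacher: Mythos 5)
Your proposal is correct and is essentially the paper's own route: the paper gives no written proof of this theorem beyond the remark that it follows ``a similar analogue of the previous section,'' i.e., by specializing Theorem \ref{299} (which extends to the nearly K\"ahler setting via Remark \ref{104}) with the curvature tensor of a generalized complex space form --- precisely the Section~6 scheme you carry out in detail. Your evaluation of the mixed-curvature sum as $n_1n_2\frac{c_{RK}+3\gamma}{4}$ (with the $J$-terms killed by invariance of $N_T$, anti-invariance of $N_\perp$, and skew-symmetry of $\Phi$), the identity $\frac{\Delta f}{f}=\Delta(\ln f)-\|\nabla \ln f\|^2$ under the sign convention (\ref{M1}), and the verbatim inheritance of the equality case from Theorem \ref{299}(ii) all match what the paper leaves implicit.
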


It is clear that the above theorem generalizes Theorem \ref{dp.}. To see that, just let $\gamma$ vanish.

\section{Research problems based on the main Inequality: Theorem \ref{299}}

Due to the results of this paper, we hypothesize a pair of open problems, the first is about proving this inequality whereas the second is to classify warped products $CR$-submanifolds. 

Firstly, since warped product $CR$-submanifolds do exist if the ambient manifold is locally conformal Kahler space form, we suggest the following
\begin{problem}\label{}
Prove the above inequality for warped product $CR$-submanifolds in locally conformal Kahler space forms.
\end{problem}

Secondly, we asked
\begin{problem}\label{}
Can we classify warped product $CR$-submanifolds satisfying the equality cases of this inequality in locally conformal Kahler space forms ?
\end{problem}

\begin{center}

Acknowledgment 

The first author want to offer many thanks for his university, PTUK, Palestine Technical University- Kadoori
\end{center}

\end{document}